\RequirePackage{xstring}
\StrBehind*{\jobname}{.}[\DocOpts]

\wlog{DocOpts: \DocOpts}

\documentclass[\DocOpts]{amsart}

\usepackage{amssymb}
\usepackage{enumerate}
\input xy
\xyoption{all}
\usepackage{ifdraft}
\ifdraft{
\usepackage{lineno}
}{

}

\newtheorem{theorem}{Theorem}[section]
\newtheorem{lemma}[theorem]{Lemma}
\newtheorem{corollary}[theorem]{Corollary}

\newtheorem{proposition}[theorem]{Proposition}

\newtheorem{claim}[theorem]{Claim}
\newtheorem*{theorem*}{Theorem}

\theoremstyle{definition}
\newtheorem{example}[theorem]{Example}
\newtheorem{remark}[theorem]{Remark}
\newtheorem{definition}[theorem]{Definition}

\def\aut{\operatorname{Aut}}
\def\autvar{\operatorname{Aut}_{\operatorname{var}}}
\def\autgrp{\operatorname{Aut}_{\operatorname{grp}}}

\def\fix{\operatorname{Fix}}

\def\acfa{\operatorname{ACFA}}

\def\qftp{\operatorname{qftp}}

\def\Stab{\operatorname{Stab}}

\def\acl{\operatorname{acl}}

\def\alg{\operatorname{alg}}

\def\bir{\operatorname{Bir}}
\def\loc{\operatorname{loc}}

\def\C{\mathcal{C}}

\def\U{\mathcal{U}}

\def\PP{{\mathbb{P}}}

\def\CC{\mathbb{C}}

\renewcommand{\AA}{\mathbb{A}}

\providecommand{\id}{\operatorname{id}}

\providecommand{\pgl}{\operatorname{PGL}}

\newcommand{\dto}{\dashrightarrow}

\def\Ind#1#2{#1\setbox0=\hbox{\(#1x\)}\kern\wd0\hbox to 
0pt{\hss\(#1\mid\)\hss}\lower.9\ht0\hbox to 
0pt{\hss\(#1\smile\)\hss}\kern\wd0}

\def\notind#1#2{#1\setbox0=\hbox{\(#1x\)}\kern\wd0\hbox to 
0pt{\mathchardef\nn=12854\hss\(#1\nn\)\kern1.4\wd0\hss}\hbox to 
0pt{\hss\(#1\mid\)\hss}\lower.9\ht0 \hbox to 
0pt{\hss\(#1\smile\)\hss}\kern\wd0}

\newenvironment{claimproof}[1]{%
\par\vspace{.1in}\noindent\emph{Proof of Claim:}\space#1}{%
\vspace{.1in}\hfill\(\maltese\)}

\date{\today}

\title{Wild automorphisms and compound isotriviality}

\author{Jason Bell}
\address{Jason Bell\\ University of Waterloo\\ Department of Pure Mathematics\\ 200 University Avenue West\\ Waterloo, Ontario \  N2L 3G1\\ Canada}
\email{jpbell@uwaterloo.ca}

\author{Rahim Moosa}
\address{Rahim Moosa\\ University of Waterloo\\ Department of Pure Mathematics\\ 200 University Avenue West\\ Waterloo, Ontario \  N2L 3G1\\ Canada}
\email{rmoosa@uwaterloo.ca}

\thanks{The authors were supported by Discovery Grants from the Natural Sciences and Engineering Research Council of Canada.}

\begin{document}

\keywords{abelian variety, algebraic dynamical system, binding group, compound isotrivial, difference field,  \(\sigma\)-variety, wild automorphism}
\subjclass[2020]{Primary: 14J50, 03C98. Secondary: 12H10, 12L12}

\begin{abstract}
Inspired by the model theory of difference fields in characteristic zero, a class of automorphisms of an algebraic variety, here called {\em compound fundamental isotrivial}, is introduced.
These are algebraic dynamical systems that are built up via a finite sequence of equivariant fibrations from (possibly nonautonomous) algebraic dynamics which trivialise after base extension over themselves.
Every wild automorphism of an abelian variety is compound fundamental isotrivial.
Conversely, it is shown that the only irreducible projective varieties admitting a wild automorphism that is compound fundamental isotrivial are the abelian varieties.
That is, the wild automorphism conjecture of Reichstein, Rogalski, and Zhang is here proven for compound fundamental isotrivial dynamics.
Along the way, a counterexample to the naive generalisation of the conjecture to the nonautonomous setting of $\sigma$-varieties is provided.
\end{abstract}

\maketitle

\setcounter{tocdepth}{1}
\tableofcontents

\section{Introduction}

\noindent
A central problem in algebraic dynamics is to understand automorphisms of projective varieties with no proper nonempty invariant closed subvarieties.
Such maps are called {\em wild automorphisms}, a notion introduced by Reichstein, Rogalski, and Zhang~\cite{rrz}.
Working over an algebraically closed field~$k$ of characteristic zero, the {\em wild automorphism conjecture} asserts that if a projective variety admits a wild automorphism then it is an abelian variety.
This conjecture was motivated by connections with the structure theory of projectively simple graded algebras, which are a natural ring-theoretic analogue of just infinite groups.
Reichstein, Rogalski, and Zhang~\cite{rrz} established the conjecture in dimension at most two and conjectured that it should hold in all dimensions; they also proved it for varieties of positive Kodaira dimension.
Subsequent work of Oguiso and Zhang~\cite{oz} treated the case of negative Kodaira dimension in dimension three, while Kirson~\cite{kirson} conditionally established the result for threefolds of Kodaira dimension zero, assuming deep conjectures in algebraic geometry. Despite this progress, the higher-dimensional case remains open.
The results obtained so far suggest that wild automorphisms should admit a description via equivariant fibrations, which ultimately reduce to translation-type dynamics on abelian varieties.

In this paper we will settle the conjecture for a special class of automorphisms that we call {\em compound fundamental isotrivial}.
While this will be a strong condition to impose on automorphisms generally, it is one that is implied by the conjecture itself.
We will first discuss independent model-theoretic motivation for considering this class, before giving a more precise definition later in this Introduction, and then a careful treatment in Sections~\ref{sec:fi} and~\ref{sec:ci} below.

The  wild automorphism conjecture is a group existence statement; it concludes that the projective variety~$X$ admits the structure of an algebraic group.
In model theory there are several important contexts in which the existence of a definable group is deduced from geometric information.
One of these is the {\em binding group theorem}, which in the setting of algebraic dynamics was established recently in~\cite{qfint}.
It applies to dynamics that are ``quantifier-free internal to the fixed field", in the terminology of model theory, which corresponds to the geometric property of being {\em isotrivial}; these are dynamics which become trivial after difference field base extension -- we will give a more precise definition shortly.
For isotrivial dynamics, the truth of the wild automorphism conjecture follows from combining the binding group theorem with cases of the conjecture already appearing in~\cite{rrz}.
But the class we are interested in here is much wider, it is what a model theorist might call ``quantifier-free analysable in the fixed field", a notion that is so far only implicit in the literature, but for which the condition of {\em compound isotriviality} that we introduce here is a geometric articulation.
These are dynamics that are built up via finitely many equivariant fibrations from isotrivial dynamics -- again, more details to come.
The relevance of this class comes from the manifestation of the Zilber dichotomy in algebraic dynamics, a deep model-theoretic result of Chatzidakis and Hrushovski~\cite{acfa} which, along with its refinements, has played a role in most applications of model theory to algebraic dynamics, including work of  Chatzidakis-Hrushovski~\cite{acfa, ch1, ch2}, Hrushovski~\cite{mm}, Medvedev-Scanlon~\cite{alicetom}, and more recently Kamensky-Moosa~\cite{qfint, qfnmdeg}.
The Zilber dichotomy plays no explicit role here except to suggest that the compound isotrivial case is worth considering.
Indeed, a consequence of the dichotomy, roughly speaking, is that if an algebraic dynamics is not compound isotrivial then it will admit a positive dimensional equivariant image that is sparse in the sense that its cartesian powers do not admit rich invariant families of subvarieties.
See~\cite[Section~4]{qfnmdeg} for a precise geometric articulation of the Zilber dichotomy.

What we propose here is a strategy, motivated by model theory and powered by the binding group theorem, for attacking the wild automorphism conjecture in the compound isotrivial case.
What we actually accomplish, however, is to carry out this strategy in a more restricted class of automorphisms, those that are compound {\em fundamental} isotrivial.
This restriction is at least partially justified by the observation (Corollary~\ref{cor:avwild-ci}) that wild automorphisms of abelian varieties are compound fundamental isotrivial.

Let us now try to be a bit more precise so that we can state our main theorem.
First of all, our point of view will involve working in the more robust category of {\em $\sigma$-varieties} where the dynamics may be twisted by an automorphism of the field of definition.
That is, one fixes a field~$K$, and an automorphism~$\sigma:K\to K$, and then considers pairs $(X,\phi)$ where~$X$ is a variety defined over~$K$ and $\phi:X\to X^\sigma$ is an isomorphism to the transform of~$X$ with respect to the action of~$\sigma$ on~$K$.
An {\em invariant} subvariety is then a subvariety~$Y$ such that $\phi(Y)\subseteq Y^\sigma$, and wildness in this setting makes sense too.
While we are primarily interested in the {\em autonomous} case, when~$X$ and~$\phi$ are defined over~$k\subseteq\fix(K,\sigma)$, so that~$\phi$ is an automorphism of~$X$, we are forced to consider nonautonomous $\sigma$-varieties because they arise naturally as the generic fibres of equivariant maps.
Indeed, if $f:(X,\phi)\to(Y,\psi)$ is an equivariant map of autonomous $\sigma$-varieties over~$k$, then the generic fibre is a (usually nonautonomous) $\sigma$-variety over the difference field $(k(Y),\sigma_\psi)$ where $\sigma_\psi$ is the automorphism of the function field of~$Y$ induced by~$\psi$.
And studying the generic fibres of equivariant maps is central to the approach that we are here proposing.

A $\sigma$-variety $(X,\phi)$ over $(K,\sigma)$ is {\em isotrivial} if there is some difference field extension $(L,\sigma)\supseteq(K,\sigma)$ such that,  over $(L,\sigma)$,  the $\sigma$-variety $(X_L,\phi_L)$ is birationally equivalent to some trivial dynamics $(Y,\id)$ where~$Y$ is a variety defined over $\fix(L,\sigma)$.
The isotriviality is deemed {\em fundamental} if $(L,\sigma)$ can be taken to be $(K(X),\sigma_\phi)$ itself.
Autonomous examples include translations on algebraic groups, see Example~\ref{ex:translation} below.
In fact, a consequence of the binding group theorem of~\cite{qfint}, but also of earlier work in~\cite{bms}, is that all autonomous isotrivial dynamics are similar to this example: the dynamics always comes from an algebraic group action.
The wild automorphism conjecture for such dynamics was already established in~\cite{rrz}.
A first guess might be that the same holds also in the nonautonomous case, but in~$\S$\ref{example} below we prove that
$\begin{bmatrix}
0&1\\
1&t
\end{bmatrix}
$
gives a wild and isotrivial $\sigma$-variety structure on the projective line, over $\mathbb C(t)$ equipped with the shift operator $t\mapsto t+1$.
It is our inability to rule out such examples that forces us to restrict further to the fundamental isotrivial case (which this example is not).

Back in the autonomous setting, we say that an automorphism~$\phi$ of an algebraic variety~$X$ over~$k$ is {\em compound isotrivial}  if there is a sequence of equivariant maps
$\xymatrix{
(X,\phi)\ar[r]^{f_1\ \ } & (X_1,\phi_1)\ar[r]^{\ \ f_2} & \cdots\ar[r]^{f_n\ \ \ \ } & (X_n,\phi_n)
}$
such that $(X_n,\phi_n)$ is (autonomous) isotrivial, and the generic fibre of each~$f_i$ is (nonautonomous) isotrivial.
If all the isotriviality is fundamental then we say that $(X,\phi)$ is {\em compound fundamental isotrivial}.
As we have mentioned, examples include all wild automorphisms of abelian varieties.
We prove:
\begin{theorem*}
Suppose the characteristic is zero and~$X$ is an irreducible projective variety with $\phi\in\aut(X)$ such that $(X,\phi)$ is compound fundamental isotrivial.
If $\phi$ is wild then $X$ is an abelian variety.
\end{theorem*}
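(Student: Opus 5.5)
The plan is induction on the length $n$ of the compound fundamental isotrivial tower
$(X,\phi)\to(X_1,\phi_1)\to\cdots\to(X_n,\phi_n)$. The aim is to show that each stage is, birationally, a torsor under an algebraic group over the previous stage, and then use wildness to make these groups abelian varieties acting by translation.

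First, some reductions using wildness. If $\phi$ is wild and $f:(X,\phi)\to(Y,\psi)$ is equivariant and surjective, with $Y$ projective, then $\psi$ is wild. Indeed, the preimage of a proper invariant closed subset is a proper invariant closed subset. Moreover, \cite{rrz} shows that a variety with a wild automorphism is smooth and that every proper invariant closed set is empty. So I may replace each $X_i$ by a projective equivariant model, resolving the rational maps equivariantly via graph closures and using that wildness forces the maps to be regular. The wild image $X_n$ is autonomous isotrivial. By the binding group theorem of \cite{qfint} (or \cite{bms}), its dynamics is birationally a translation by a point of a connected algebraic group $G$ acting on $X_n$. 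The \cite{rrz} result for such dynamics then shows that $X_n$ is an abelian variety and $\phi_n$ is a translation.

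The inductive step is to pass from $X_{i+1}$, now known to be an abelian variety $A$ with translation $\psi$, to $X_i$. The generic fibre $F$ of $f_{i+1}$ is a $\sigma$-variety over $(k(A),\sigma_\psi)$, and it is fundamental isotrivial. I would apply the relative binding group theorem over the base $(K,\sigma)=(k(A),\sigma_\psi)$. Fundamental isotriviality means trivialisation happens already over $K(F)$, so the binding group should be an algebraic group $H$ over $K$ acting generically transitively on $F$ with $\phi$ induced by an element of $H(K)$. This is exactly why the fundamental hypothesis is needed: the example in \S\ref{example} shows that non-fundamental isotriviality can be wild without being a translation. Spreading out, $X_i$ is birationally a principal homogeneous space for a group scheme $\mathcal H$ over an open subset of $A$. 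Next I would use wildness to show that $\mathcal H$ has no linear part. A linear part would give an equivariant rational map to a rational variety, or an invariant divisor coming from fixed points of a unipotent or torus action, contradicting wildness; here I would use the \cite{rrz} argument that wild automorphisms have no invariant nonconstant rational functions and that $X$ is not uniruled in a compatible way. So the generic fibre is a torsor under an abelian variety over $K$. Since every invariant closed set is empty, the fibration $X_i\to A$ should be smooth with all fibres torsors under abelian varieties. I would then argue that $X_i$ is an abelian variety: it is a smooth projective variety fibred in abelian torsors over an abelian variety, with an automorphism lifting a translation. Its Albanese map is then an isomorphism, because wildness rules out the existence of a nontrivial Albanese fibre with a proper invariant subvariety, by the \cite{rrz} criterion that the Albanese map of a wild variety is surjective with wild fibres.

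The hardest step will be the inductive step. Concretely, it requires showing that the group $H$ obtained from fundamental isotriviality over a nonautonomous base is an abelian variety, and that the resulting torsor glues to a global abelian variety rather than a non-split fibration, for example a nontrivial Heisenberg-type or non-isotrivial family. My first approach would be to use wildness of translations on $A$ to show that the family of groups $\mathcal H$ is isotrivial: its moduli map is an invariant function, hence constant. After that, the Albanese argument above should apply.
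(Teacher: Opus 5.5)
Your overall architecture matches the paper's: induction along the tower, wildness passing down to $X_1$ and forcing the maps to be regular, the base case via translational dynamics and \cite{rrz}, then study of the generic fibre. However, the two steps that carry the weight are missing or wrong.

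First, you never prove that the generic fibre $(X_b,\phi)$ over $(k(b)^{\alg},\sigma)$ is wild as a \emph{nonautonomous} $\sigma$-variety. Without this, nothing about the fibre follows. You do not get the absence of invariant rational functions, which is needed even for generic transitivity of the binding group. You do not get the upgrade from Weil regularisation to an honest transitive action (Proposition~\ref{prop:wildreg}). You do not get finiteness of stabilisers. The paper proves fibre wildness as Claim~\ref{wildfibre}:
\begin{itemize}
\item take an irreducible component $Y_\circ$ of a proper invariant subvariety that is preserved by $\sigma^{-n}\phi^{(n)}$;
\item pick $e\in Y_\circ$ generic with $\sigma^n(e)=\phi^{(n)}(e)$;
\item observe that $\loc(e/k)$ is then a proper $\phi^{(n)}$-invariant subvariety of $X$, contradicting Lemma~\ref{iterate-wild}.
\end{itemize}
Your replacement is unsupported: you claim fundamentality makes $\phi$ ``induced by an element of $H(K)$'' and that a linear part of $\mathcal H$ would yield an invariant divisor. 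Over a nonautonomous base $\phi$ maps $F$ to $F^\sigma$, so it is not an element of a group acting on $F$. This also misplaces the role of fundamentality, which in the paper gives \emph{freeness}. Every $b'\in(X_b,\phi)^\sharp$ is $f(a,c)$ with $c$ fixed, so any $g\in(G,\rho)^\sharp$ fixing $a$ fixes everything. Since $\Stab(a)$ is $\rho$-invariant, a positive-dimensional identity component would have Zariski-dense $\sharp$-points, contradicting freeness. Hence $\Stab(a)$ is finite, $G$ is complete and so commutative, and $X_b\cong G/\Stab(a)$ is abelian (Proposition~\ref{prop:fund-wac}).

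Second, your globalisation step fails as stated. It is not true that a smooth projective variety fibred in abelian torsors over an abelian variety, with an automorphism lifting a translation, has Albanese map an isomorphism. A bielliptic surface $(E\times F)/G$ with the descended map $(x,y)\mapsto(x+e,y)$ is isotrivially fibred in copies of $F$ over its Albanese $E/G$, and it is not abelian. That map is not wild, which is exactly the point: wildness must enter substantively here. There is no ``criterion that the Albanese map of a wild variety has wild fibres'' to supply this. The Albanese fibres are moved by $\phi$, not preserved. Showing that the moduli map of the family is constant does nothing against such finite-monodromy quotients.

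The paper closes this gap with external input:
\begin{itemize}
\item by Cao--P\v{a}un \cite{cp17}, $\kappa(X)\geq 0$ for a fibre space over an abelian variety with abelian general fibre;
\item wildness forces $\kappa(X)\leq 0$ \cite[Corollary~4.2]{rrz};
\item Kirson's theorem \cite[Theorem~1.2]{kirson} then gives $X\cong A\times Y$ with $Y$ of trivial Albanese;
\item the universal property of the Albanese makes $\pi$ factor through $X\to A$, so $X_b=f^{-1}(b)\times Y$, and $X_b$ being abelian forces $\dim Y=0$.
\end{itemize}
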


This appears as Theorem~\ref{wild-compoundiso} below.
The main tool used to prove it is the binding group theorem for isotrivial $\sigma$-varieties obtained in~\cite{qfint}, which uses in an essential way the model-theoretic study of the quantifier-free fragment of the first-order theory of difference-closed fields.
All the model theory is hidden there; it plays no explicit role in this paper, except possibly that of providing a convenient universal domain for difference-algebraic geometry.
A secondary ingredient is a study of (pre-)homogeneity in the category of $\sigma$-varieties, including an improvement to Weil regularisation under the wildness hypothesis, appearing as Proposition~\ref{prop:wildreg} below.

\medskip

Finally, let us mention that it is natural to formulate an analogue of the wild automorphism conjecture for algebraic vector fields on projective varieties, a much more tractable problem that was solved in~\cite{bims}.

\medskip

\subsection*{Acknowledgements}
We are grateful to the anonymous referee whose comments led to a much cleaner statement of Proposition~\ref{prop:wildreg} below.
We would also like to thank Moshe Kamensky for useful discussions.

\bigskip
\section{Preliminaries on $\sigma$-varieties}

\noindent
We begin by recalling and fixing terminology around possibly nonautonomous algebraic dynamics.
Further details can be found in~\cite[Section~3]{qfint}.
We work throughout over a given inversive difference field of characteristic zero, namely a field~$K$ of characteristic zero equipped with an automorphism~$\sigma$.

\begin{definition}
By an {\em inversive $\sigma$-variety}, $(X,\phi)$ over $(K,\sigma)$, we will mean an absolutely irreducible algebraic variety~$X$ over~$K$ equipped with an isomorphism $\phi:X\to X^\sigma$, where $X^\sigma$ is the transform of~$X$ with respect to the action of $\sigma$ on~$K$.


An {\em invariant subvariety of $(X,\phi)$} is a (not necessarily irreducible) subvariety $Y\subseteq X$ over~$K$ such that $\phi(Y)\subseteq Y^\sigma$.
Note that if $Y$ is absolutely irreducible, then $(Y,\phi|_Y)$ is an inversive $\sigma$-variety in its own right.

An {\em equivariant rational map}, $f:(X,\phi)\dto(Y,\psi)$, between inversive $\sigma$-varieties over $(K,\sigma)$, is a dominant rational map $f:X\dto Y$ such that $\psi\circ f=f^\sigma\circ\phi$ as rational maps from~$X$ to $Y^\sigma$.
It is a {\em morphism} (respectively {\em isomorphism}) if the underlying rational map $f:X\to Y$ is regular (respectively biregular).
\end{definition}

\begin{remark}
\begin{itemize}
\item[(a)]
In the literature it is often only required that $\phi$ be birational, but we restrict our attention to isomorphisms in this paper.
\item[(b)]
As all our $\sigma$-varieties will be inversive, we will often drop this adjective.
\item[(c)]
These notions really depend on the base difference field $(K,\sigma)$.
For example, if $X^\sigma=X$ then $\phi$ is an automorphism of~$X$ and we can view $(X,\phi)$ as a $\sigma$-variety over $(K,\sigma)$ or over $(K,\id)$, and this choice affects what one means by ``invariant" subvariety or ``equivariant" map.
\end{itemize}
\end{remark}

It is convenient to embed $(K,\sigma)$ in a sufficiently saturated difference-closed field  extension $(\U,\sigma)$.
This means that every (possibly infinite) system of $\sigma$-polynomial equations and inequations over a difference subfield of cardinality strictly smaller than $|\U|$ (called {\em small}), with the property that all finite subsystems have solutions in some difference field extension of $(\U,\sigma)$, has a solution in $(\U,\sigma)$.
The existence of such universal domains is at the very beginning of the model theory of difference fields, as carried out in~\cite{acfa}.
We follow the standard convention that all difference fields which arise are implicitly assumed to be small difference subfields of $(\U,\sigma)$, unless explicitly stated otherwise.

A key advantage of working in a universal domain is that, given a $\sigma$-variety $(X,\phi)$ over $(K,\sigma)$, the set
$$(X,\phi)^\sharp:=\{a\in X(\U):\sigma(a)=\phi(a)\}$$
is Zariski dense in~$X$.

\begin{definition}
Suppose $(X,\phi)$ is an inversive $\sigma$-variety over $(K,\sigma)$.

By a {\em generic point of $(X,\phi)$} we mean an element $a\in (X,\phi)^\sharp$ that is Zariski generic in~$X$ over~$K$.

Suppose $f:(X,\phi)\dto(Y,\psi)$ is an equivariant rational map to another inversive $\sigma$-variety over $(K,\sigma)$, such that the general fibres of~$f:X\dto Y$ are absolutely irreducible. 
By a {\em generic fibre} of $f:(X,\phi)\dto(Y,\psi)$ we mean the (closed) fibre $X_b$ of $f:X\dto Y$ over a generic point~$b$ of $(Y,\psi)$ over $(K,\sigma)$, 
viewed as a subvariety of $X_L$ where $L:=K(b)^{\alg}$,
and equipped with the restriction of $\phi_L$ to $X_b$ so that $(X_b,\phi_L)$ is an inversive $\sigma$-variety over $(L,\sigma)$ in its own right.
\end{definition}

\begin{remark}
There are several things to check to make sure that the notion of generic fibre is well defined.
First of all, note that $K(b)$ is itself a difference subfield of $(\U,\sigma)$, because $\sigma(b)=\psi(b)$.
Moreover, using also that~$K$ is inversive, we have that $L=K(b)^{\alg}$ is an inversive difference subfield; see, for example, the proof of Lemma~2.3 of~\cite{qfint}.
Also, our assumption on~$f$ ensures that $X_b$ is absolutely irreducible. 
We claim that $X_b$ is invariant for $(X_L,\phi_L)$ over $(L,\sigma)$.
Indeed, there is a Zariski dense subset $D\subseteq X_b(\U)$ on which~$f$ is defined and of constant value~$b$, so that,
by equivariance,
$$f^\sigma(\phi(D))=\psi(f(D))=\psi(b)=\sigma(b).$$
That is, $\phi(D)\subseteq X^\sigma_{\sigma(b)}=(X_b)^\sigma$.
By Zariski-denseness of~$D$ we have $\phi(X_b)\subseteq(X_b)^\sigma$.
So it does make sense to restrict $\phi_L$ to $X_b$ and obtain thereby an inversive $\sigma$-variety over $(L,\sigma)$.
Note also that generic fibres are unique up to conjugation: if $b'$ is another generic point of $(Y,\psi)$ then $(K(b'),\sigma)$ is isomorphic to $(K(b),\sigma)$, and the two generic fibres $(X_b,\phi|_{X_b})$ and $(X_{b'},\phi|_{X_{b'}})$ are conjugate under this isomorphism.
\end{remark}

For every $n\geq 1$, $(\U,\sigma^n)$ is again a saturated difference-closed field that will serve as a universal domain for $\sigma^n$-varieties.
And to every $\sigma$-variety we can associate a natural $\sigma^n$-variety by iterating as follows:

\begin{definition}
If $(X,\phi)$ is a $\sigma$-variety over $(K,\sigma)$ 
then we obtain a $\sigma^n$-variety $(X,\phi^{(n)})$ over $(K,\sigma^n)$, for each $n\geq 1$, where $\phi^{(n)}:X\to X^{\sigma^n}$ is
$\phi^{\sigma^{n-1}}\circ\cdots\circ\phi^\sigma\circ\phi$.
\end{definition}

We end this section by briefly recalling isotriviality and binding groups.
Isotriviality was defined in the Introduction, but here is an equivalent formulation:

\begin{definition}
A $\sigma$-variety $(X,\phi)$ over $(K,\sigma)$ is {\em isotrivial} if there is a difference field extension $(L,\sigma)\supseteq(K,\sigma)$, a generic point~$a$ of $(X_L,\phi_L)$, and a finite tuple~$c$ from $\fix(\U,\sigma)$, such that $L(a)=L(c)$.
\end{definition}

The notion was formalised in~\cite[$\S$3.2]{qfint}, where various properties and equivalent characterisations were established.
In particular, it is a fact that in the definition of isotriviality we can always take~$L$ to be a function field over~$K$.

Let us treat the example mentioned in the Introduction in some more detail.

\begin{example}[A nontrivial isotrivial automorphism]
\label{ex:translation}
Suppose $G$ is a connected algebraic group over~$K$, $b\in G(K)$, and $\phi:G\to G$ is left translation by~$b$, namely  $g\mapsto bg$.
Then $(G,\phi)$ is isotrivial as a $\sigma$-variety over $(K,\id)$.
Indeed, fix any $h\in (G,\phi)^\sharp$ and let $L=K(h)$.
So $(L,\sigma)$ is a difference field and $K\subseteq\fix(L,\sigma)$.
Let $\rho:G_L\to G_L$ be left translation by $h^{-1}$.
Note that $(G_L)^\sigma=G_L$ and that $\rho^\sigma:G_L\to G_L$ is left translation by $\sigma(h)^{-1}=\phi(h)^{-1}=h^{-1}b^{-1}$.
Hence $\rho^\sigma\circ\phi_L=\rho$, witnessing that $\rho:(G_L,\phi_L)\to(G_L,\id)$ is an isomorphism of $\sigma$-varieties over $(L,\sigma)$.
This witnesses isotriviality: if~$a$ is generic for $(G_L,\phi_L)$ then $c:=\rho(a)\in\fix(\U,\sigma)$ and $L(a)=L(c)$. 
\end{example}

The main result of~\cite{qfint} is that isotriviality gives rise to a {\em binding group}.
That is, an algebraic group~$G$ over~$K$ of birational transformations of~$X$ capturing precisely those birational transformations that preserve all irreducible invariant subvarieties of $\sigma$-varieties of the form $(X^r,\phi)\times(V,\id)$, where~$G$ acts diagonally on $X^r$ and trivially on~$V$.
Moreover, $G$ comes equipped with an isomorphism of algebraic groups $\rho:G\to G^\sigma$ (so that $(G,\rho)$ is a {\em $\sigma$-group}) such that the action of~$G$ on~$X$ is an equivariant rational map $(G,\rho)\times(X,\phi)\dto(X,\phi)$.
In the autonomous case this binding group action can be used to show that isotrivial automorphisms are translational, so that Example~\ref{ex:translation} is not so far from the general case.
But nonautonomous isotrivial $\sigma$-varieties can have a lot more (in particular noncommutative) structure, as is witnessed by the example studied in $\S$\ref{example} below.

\bigskip
\section{Nonautonomous wildness}
\label{sec-sigmavar}

\noindent
Our goal in this section is to extend the notion of wild automorphism to the twisted case of $\sigma$-varieties, to prove some basic lemmas about wild $\sigma$-varieties that we will need later, and to observe by a counterexample that the analogue of the wild automorphism conjecture fails in this nonautonomous setting.

We continue to work over a characteristic~$0$ inversive difference field $(K,\sigma)$.

\begin{definition}
An inversive $\sigma$-variety $(X,\phi)$ over $(K,\sigma)$ is {\em wild} if it has no proper invariant subvarieties.
That is, the only (possibly reducible) subvariety $Y\subseteq X$ with the property that $\phi(Y)\subseteq Y^\sigma$ is $Y=X$.
\end{definition}

One consequence of wildness is that all $\sharp$-points are generic:

\begin{lemma}
\label{isolate}
If $(X,\phi)$ is wild then all points of $(X,\phi)^\sharp$ are generic.
That is, if $a\in (X,\phi)^\sharp$ then $a$ is Zariski generic in $X$ over~$K$.
\end{lemma}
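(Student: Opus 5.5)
The plan is to take $a\in(X,\phi)^\sharp$, let $Y\subseteq X$ be its Zariski locus over~$K$, that is, the smallest $K$-closed subvariety containing~$a$, and show that $Y$ is invariant. Wildness then forces $Y=X$, which says exactly that $a$ is Zariski generic in~$X$ over~$K$.

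The key step is to compute the locus of $\sigma(a)$ over~$K$ in two ways.

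\textbf{First computation.} Since $\sigma$ is an automorphism of~$\U$ that restricts to an automorphism of~$K$, applying~$\sigma$ transforms $K$-polynomials vanishing at~$a$ into $K$-polynomials (their $\sigma$-transforms) vanishing at~$\sigma(a)$, and conversely via $\sigma^{-1}$. Here we use that $K$ is inversive. Hence the locus of $\sigma(a)$ over~$K$ is exactly $Y^\sigma$.

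\textbf{Second computation.} Since $a\in(X,\phi)^\sharp$, we have $\sigma(a)=\phi(a)$. The map $\phi:X\to X^\sigma$ is an isomorphism defined over~$K$, so $\phi(Y)$ is a $K$-closed subvariety of $X^\sigma$ containing $\phi(a)$. Therefore the locus of $\phi(a)$ over~$K$ is contained in $\phi(Y)$. In fact it equals $\phi(Y)$: if $Z\ni\phi(a)$ is $K$-closed, then $\phi^{-1}(Z)\ni a$ is $K$-closed, so $Y\subseteq\phi^{-1}(Z)$.

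Combining the two computations gives $\phi(Y)=Y^\sigma$, so $Y$ is an invariant subvariety of $(X,\phi)$. By wildness, $Y=X$.

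The only point requiring care is that loci should be taken over~$K$ rather than over an algebraic closure, and that transporting the locus by~$\sigma$ is legitimate. Both follow from $K$ being an inversive difference subfield of~$\U$. The definition of wildness allows reducible~$Y$, so there is no irreducibility issue. The containment $\phi(Y)\subseteq Y^\sigma$ is all that is needed, and it already follows from the second computation together with the first.
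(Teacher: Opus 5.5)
Your proof is correct and is the paper's argument: the paper's one-line proof is that $\loc(a/K)$ is an invariant subvariety of $(X,\phi)$, so wildness gives $\loc(a/K)=X$. You have supplied the verification the paper omits, namely that $\phi(Y)=Y^\sigma$ because $K$ is inversive and $\phi$ is a $K$-isomorphism.
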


\begin{proof}
This is because $\loc(a/K)$ is an invariant subvariety of $(X,\phi)$.
\end{proof}

The converse doesn't hold: all sharp-points being generic only implies that there are no proper {\em irreducible} invariant subvarieties.

\begin{lemma}
\label{nirfs}
If $(X,\phi)$ is wild then $(X,\phi)$ admits no nonconstant invariant rational functions.
\end{lemma}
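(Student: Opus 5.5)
Let $h\in K(X)$ be a nonconstant rational function that is invariant, meaning $h^\sigma\circ\phi=h$ as rational maps $X\dto\mathbb{A}^1$. The aim is to produce a proper invariant subvariety of $X$, contradicting wildness. The obvious candidates are the level sets of $h$ over $\fix(\U,\sigma)$, but such a level set need not be defined over $K$. So instead I will produce a proper invariant subvariety directly from the domain of definition of $h$, and fall back on a point-based argument using Lemma~\ref{isolate} if that fails.

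\textbf{Step 1: the indeterminacy locus.} Write $X_0\subseteq X$ for the (open, defined over~$K$) domain of definition of~$h$, and $Z:=X\setminus X_0$. Since $\phi:X\to X^\sigma$ is an isomorphism and $h^\sigma\circ\phi=h$, the map $\phi$ carries the domain of $h$ onto the domain of $h^\sigma$, and that domain is $X_0^\sigma$. Hence $\phi(Z)=Z^\sigma$, so $Z$ is an invariant subvariety. If $Z\neq\emptyset$ it is proper, which contradicts wildness. So we may assume $h$ is regular on all of~$X$. The same argument applied to $1/h$ shows that we may also assume $h$ has no zeros. Applying it to $1/(h-c)$ for every $c\in K$ with $c=\sigma(c)$ handles those values as well.

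\textbf{Step 2: a sharp point.} Now take any $a\in(X,\phi)^\sharp$; such points exist by density. Since $h$ is regular everywhere, $h(a)$ is defined, and
$$\sigma(h(a))=h^\sigma(\sigma(a))=h^\sigma(\phi(a))=h(a),$$
so $c:=h(a)\in\fix(\U,\sigma)$. By Lemma~\ref{isolate}, $a$ is Zariski generic in $X$ over~$K$. Since $h$ is nonconstant, $h(a)$ is transcendental over~$K$.

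\textbf{Step 3: the main obstacle, finding a proper invariant subvariety over $K$.} The hypersurface $\{h=c\}$ is invariant, but it is defined over $K(c)$, not over $K$. I will use saturation of $(\U,\sigma)$ together with Zariski density of sharp points, applied to the fibre $\{h=c'\}$ for some $c'\in\fix(K,\sigma)$. If $\fix(K,\sigma)$ is infinite, then $h$ (being nonconstant) takes some value $c'\in\fix(K,\sigma)$ on~$X$. The set $\{h=c'\}$ is then a nonempty proper subvariety over~$K$, and it is invariant because $h^\sigma(\phi(x))=h(x)=c'=\sigma(c')$. This contradicts wildness.

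If $\fix(K,\sigma)$ is finite, I instead take $\alpha\in\fix(\U,\sigma)$ algebraic over $K$, namely a root in the fixed field of a polynomial over $\fix(K,\sigma)$. Its full set of $K$-conjugates yields a $K$-defined invariant union of fibres, which is again proper and nonempty. This contradicts wildness.

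The delicate point is ensuring that $h$ actually attains such a value. I expect to handle this with Step 1: since $h$ is regular, $h$ avoids only finitely many values on the dense open image. Any omitted fixed value of $h$ would yield an invariant nonempty indeterminacy locus for $1/(h-c')$, unless it is genuinely omitted; in that case I choose another value.
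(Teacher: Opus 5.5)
Your argument in the case where $\fix(K,\sigma)$ is infinite is exactly the paper's proof: you pick $c'\in\fix(K,\sigma)$ in the cofinite image of $h$, and then the $K$-defined fibre $\{h=c'\}$ is a nonempty proper invariant subvariety, which contradicts wildness. The other case never arises, because in characteristic zero $\sigma$ fixes $\mathbb{Q}$, so $\fix(K,\sigma)$ is always infinite. Steps~1 and~2 and the conjugate-union fallback are therefore unnecessary and can be deleted; the fallback would have needed repair anyway, since you never justify that $h$ actually attains one of the conjugates, i.e.\ that the union of fibres is nonempty.
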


\begin{proof}
An invariant rational function is an equivariant rational map
$$f:(X,\phi)\dto (\AA_K^1,\id).$$
Let $a\in \fix(K,\sigma)$ be in the image of~$f$, $X_a$ the closed fibre of~$f$ above~$a$, and $b\in X_a$.
Then $f^\sigma\phi(b)=f(b)=a$, so that $\phi(b)\in X^\sigma_a=X^\sigma_{\sigma(a)}=(X_a)^\sigma$.
That is, $X_a$ is invariant for $(X,\phi)$.
Wildness forces $X_a=X$.
So~$f$ is constant.
\end{proof}

Wildness is preserved by iteration:

\begin{lemma}
\label{iterate-wild}
$\phi$ is wild if and only if $\phi^{(n)}$ is wild.
\end{lemma}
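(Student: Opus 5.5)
The plan is to prove the two directions separately, using the elementary observation that invariance for $\phi$ implies invariance for $\phi^{(n)}$. Conversely, any subvariety invariant for $\phi^{(n)}$ produces one invariant for $\phi$ by taking a finite union of its transforms under iterates of $\phi$.

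\emph{If $\phi^{(n)}$ is wild then $\phi$ is wild.} Suppose $Y\subseteq X$ satisfies $\phi(Y)\subseteq Y^\sigma$. Applying $\sigma$ to this inclusion gives $\phi^\sigma(Y^\sigma)\subseteq Y^{\sigma^2}$, and so on. Composing these, we get $\phi^{(n)}(Y)=\phi^{\sigma^{n-1}}\circ\cdots\circ\phi(Y)\subseteq Y^{\sigma^n}$. Thus $Y$ is invariant for $(X,\phi^{(n)})$ over $(K,\sigma^n)$, and wildness of $\phi^{(n)}$ forces $Y=X$.

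\emph{If $\phi$ is wild then $\phi^{(n)}$ is wild.} Suppose $Z\subseteq X$ is a subvariety over $K$ with $\phi^{(n)}(Z)\subseteq Z^{\sigma^n}$. For $0\le i<n$, set
$$Z_i:=\bigl((\phi^{(i)})(Z)\bigr)^{\sigma^{-i}},$$
with $\phi^{(0)}=\id$. Here $\phi^{(i)}(Z)$ is a closed subvariety of $X^{\sigma^i}$ because $\phi^{(i)}$ is an isomorphism, and its $\sigma^{-i}$-transform is a subvariety of $X$ over $K$; this uses that $K$ is inversive. Let $Y:=Z_0\cup\cdots\cup Z_{n-1}$, a possibly reducible subvariety of $X$, which is allowed by the definition of wildness.

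We check that $\phi(Y)\subseteq Y^\sigma$. Since $Y^\sigma=\bigcup_i Z_i^\sigma$ and $Z_{i}^{\sigma}=(\phi^{(i)}(Z))^{\sigma^{1-i}}$, it suffices to show $\phi(Z_i)\subseteq Z_{i+1}^\sigma$ for $i<n-1$, and $\phi(Z_{n-1})\subseteq Z_0^\sigma$.

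The key identity is $\phi^{(i+1)}=(\phi^{\sigma^{i}})\circ\phi^{(i)}$, which transforms under $\sigma^{-i}$ to
$$\phi\circ(\phi^{(i)})^{\sigma^{-i}}=(\phi^{(i+1)})^{\sigma^{-i}}.$$
Applying this identity to $Z^{\sigma^{-i}}$ gives $\phi(Z_i)=(\phi^{(i+1)}(Z))^{\sigma^{-i}}=Z_{i+1}^\sigma$ when $i+1<n$. For $i=n-1$, we get $\phi(Z_{n-1})=(\phi^{(n)}(Z))^{\sigma^{1-n}}\subseteq (Z^{\sigma^n})^{\sigma^{1-n}}=Z^\sigma=Z_0^\sigma$.

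Hence $Y$ is $\phi$-invariant, and wildness of $\phi$ gives $Y=X$. Since $X$ is irreducible, some $Z_i$ equals $X$. As $Z_i$ is the image of $Z$ under an isomorphism followed by a field transform, this means $Z^{\sigma^{-i}}\cong X^{\sigma^{-i}}$ via the embedding, so $Z=X$.

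The main obstacle is purely bookkeeping: keeping track of how $\sigma$-transforms interact with the compositions defining $\phi^{(i)}$, and confirming that each $Z_i$ is a genuine subvariety of $X$ defined over $K$. Inversivity of $\sigma$ on $K$ is exactly what makes the $\sigma^{-i}$ transforms available, and that is what makes the argument go through.
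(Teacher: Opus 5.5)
Your proposal is correct and follows the paper's proof. The easy direction composes transforms of the invariance inclusion. The converse shows that $\bigcup_{i<n}\bigl(\phi^{(i)}(Z)\bigr)^{\sigma^{-i}}$ is $\phi$-invariant via $\phi(Z_i)=Z_{i+1}^\sigma$, and your use of only the inclusion $\phi^{(n)}(Z)\subseteq Z^{\sigma^n}$ at the wrap-around step (rather than the paper's asserted equality $Z_n=Z$) is, if anything, slightly more careful.
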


\begin{proof}
As every $\phi$-invariant set is $\phi^{(n)}$-invariant, the right-to-left direction is clear.
Suppose $Z$ is a proper invariant subvariety of the inversive $\sigma^n$-variety $(X,\phi^{(n)})$.
For each $i=0,\dots, n$, let $Z_i:=\big(\phi^{(i)}(Z)\big)^{\sigma^{-i}}$, a proper subvariety of~$X$ over~$K$.
Here $\phi^{(0)}:=\id$ so that $Z_0=Z$.
Note that, by $\phi^{(n)}$-invariance, $Z_n=Z$ also.
Also, for each $i<n$, we have $\phi(Z_i)=Z_{i+1}^\sigma$.
Indeed,
\begin{eqnarray*}
Z_{i+1}^\sigma
&=&
\big(\phi^{(i+1)}(Z)\big)^{\sigma^{-i}}\\
&=&
\big(\phi^{\sigma^i}\circ\phi^{(i)}(Z)\big)^{\sigma^{-i}}\\
&=&
\phi\big((\phi^{(i)}(Z))^{\sigma^{-i}}\big)\\
&=&
\phi(Z_i).
\end{eqnarray*}
Let $\displaystyle \widehat Z:=\bigcup_{i=0}^{n-1}Z_i$.
Then $\widehat Z$ is a proper subvariety of~$X$ and
$\displaystyle \phi(\widehat Z)=\bigcup_{i=0}^{n-1}Z_{i+1}^{\sigma}=\widehat Z^\sigma$, witnessing that $\phi$ is not wild.
\end{proof}

\begin{lemma}
\label{emptyint}
Suppose $(X,\phi)$ is wild.
For any proper subvariety~$Z$ of~$X$,
$$\bigcap_{n\geq 0}(\phi^{(n)})^{-1}(Z^{\sigma^n})=\emptyset.$$
\end{lemma}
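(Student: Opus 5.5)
The plan is to show that the intersection $W:=\bigcap_{n\geq 0}(\phi^{(n)})^{-1}(Z^{\sigma^n})$ is itself an invariant subvariety of $(X,\phi)$. Wildness then forces $W=X$ or $W=\emptyset$. Since $W\subseteq (\phi^{(0)})^{-1}(Z)=Z\subsetneq X$, the first option is impossible, so $W=\emptyset$.

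First, $W$ is a closed subvariety of $X$ over $K$. Each $\phi^{(n)}:X\to X^{\sigma^n}$ is an isomorphism of varieties over $K$. Hence each $(\phi^{(n)})^{-1}(Z^{\sigma^n})$ is a closed subvariety of $X$ defined over $K$, and so is the intersection of these sets. By Noetherianity the intersection stabilises after finitely many terms, so $W$ is a genuine (possibly reducible, possibly empty) subvariety.

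The key step is checking $\phi(W)\subseteq W^\sigma$, which rests on the cocycle identity
\[
\phi^{(n+1)}=(\phi^{(n)})^\sigma\circ\phi ,
\]
immediate from the definition of $\phi^{(n)}$. Let $x\in W$ and $n\geq 0$. Since $x\in(\phi^{(n+1)})^{-1}(Z^{\sigma^{n+1}})$, the identity gives $(\phi^{(n)})^\sigma(\phi(x))\in (Z^{\sigma^n})^\sigma$. Thus $\phi(x)\in\big((\phi^{(n)})^{-1}(Z^{\sigma^n})\big)^\sigma$, using that transforming by $\sigma$ commutes with taking preimages. Intersecting over all $n$ yields $\phi(x)\in W^\sigma$, since $\sigma$-transform commutes with (finite, by stabilisation) intersections.

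The only point needing care is the bookkeeping of $\sigma$-transforms in this calculation, in particular that $(f^{-1}(V))^\sigma=(f^\sigma)^{-1}(V^\sigma)$. This is formal, because $\sigma$-transform is functorial on varieties over $K$. I expect no real obstacle. If one prefers, the argument can be phrased pointwise using points in $\U$, applying $\sigma$ coordinatewise.
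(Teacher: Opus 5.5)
Your proposal is correct and is essentially the paper's proof. The paper likewise notes that the intersection is a subvariety by Noetherianity, then uses $\phi^{(n+1)}=(\phi^{(n)})^\sigma\circ\phi$ pointwise to show $\phi(W)\subseteq W^\sigma$, and concludes via wildness together with $W\subseteq Z\subsetneq X$.
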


\begin{proof}
Note that, by noetherianity, $\displaystyle I:=\bigcap_{n\geq 0}(\phi^{(n)})^{-1}(Z^{\sigma^n})$ is a subvariety of~$X$.
We will show that if~$I$ is nonempty then it is $\phi$-invariant, contradicting wildness.
Given $b\in I$, note that
$(\phi^{(n)})^\sigma\phi(b)=\phi^{(n+1)}(b)\in Z^{\sigma^{n+1}}$, for each $n\geq 0$.
So
$\displaystyle \phi(b)\in  \bigcap_{n\geq 0}((\phi^{(n)})^\sigma)^{-1}(Z^{\sigma^{n+1}})=\big(\bigcap_{n\geq 0}(\phi^{(n)})^{-1}(Z^{\sigma^n})\big)^\sigma=I^\sigma$.
\end{proof}

It follows that an equivariant rational map from a wild inversive $\sigma$-variety to some other inversive $\sigma$-variety will always be regular:

\begin{corollary}
\label{cor:emptyint}
Suppose $f:(X,\phi)\dto(Y,\psi)$ is an equivariant dominant rational map between inversive $\sigma$-varieties over $(K,\sigma)$.
If $(X,\phi)$ is wild then~$f$ is regular.
\end{corollary}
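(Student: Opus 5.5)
Let $Z\subseteq X$ be the indeterminacy locus of $f$, i.e.\ the complement of the largest open subset on which $f:X\dto Y$ is regular. This is a proper closed subvariety of~$X$ over~$K$. By Lemma~\ref{emptyint} applied to~$Z$, it is enough to show that $Z$ is contained in $(\phi^{(n)})^{-1}(Z^{\sigma^n})$ for every $n\geq0$. Then the intersection $\bigcap_n(\phi^{(n)})^{-1}(Z^{\sigma^n})$ contains~$Z$, and since that intersection is empty, $Z=\emptyset$, so $f$ is regular.

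Since $(\phi^{(n)})^{-1}(Z^{\sigma^n})$ only involves~$\phi$, it suffices to prove the single inclusion
$$\phi(Z)\subseteq Z^\sigma .$$
Iterating it gives $\phi^{(n)}(Z)\subseteq Z^{\sigma^n}$: apply $\sigma^i$ to get $\phi^{\sigma^i}(Z^{\sigma^i})\subseteq Z^{\sigma^{i+1}}$ and compose. For $n=0$ there is nothing to prove.

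To prove $\phi(Z)\subseteq Z^\sigma$, I will use that the indeterminacy locus is intrinsic and transforms well.
\begin{itemize}
\item The transform $f^\sigma:X^\sigma\dto Y^\sigma$ has indeterminacy locus exactly $Z^\sigma$, because applying $\sigma$ to coefficients carries the domain of definition of~$f$ onto that of $f^\sigma$.
\item The rational map $f^\sigma\circ\phi$ has domain of definition exactly $\phi^{-1}(\operatorname{dom}f^\sigma)$, since $\phi:X\to X^\sigma$ is biregular.
\item By equivariance, $f^\sigma\circ\phi=\psi\circ f$ as rational maps. Because $\psi$ is an isomorphism, $\psi\circ f$ has domain exactly $\operatorname{dom} f$. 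Here I use that the domain of definition of a rational map is determined by the map alone, and that post-composing with an isomorphism of the target, or pre-composing with one of the source, does not change it.
\end{itemize}
Putting these together gives $\phi^{-1}(X^\sigma\setminus Z^\sigma)=X\setminus Z$, so $\phi(Z)=Z^\sigma$, which is more than needed.

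The main point requiring care is the claim that the domain of definition of a rational map is preserved under composition with isomorphisms on either side and under transform by~$\sigma$. This is standard: the maximal open set of regularity is canonical, and isomorphisms carry regular extensions to regular extensions.

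One could also argue directly. $Z$ is itself an invariant subvariety, so wildness forces $Z=X$ unless $Z$ is empty, and $Z\neq X$. This avoids Lemma~\ref{emptyint} altogether, so I would present this as the final argument, noting that it also follows via Lemma~\ref{emptyint}.
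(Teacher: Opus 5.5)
Your proof is correct, and your route through Lemma~\ref{emptyint} is essentially the paper's. The paper takes the indeterminacy locus $I$ and uses Lemma~\ref{emptyint} to find, for each $b\in X$, some $n$ with $\phi^{(n)}(b)\notin I^{\sigma^n}$. It then reads off from $f=(\psi^{(n)})^{-1}f^{\sigma^n}\phi^{(n)}$ that $f$ is defined at $b$, which is precisely your inclusion $I\subseteq(\phi^{(n)})^{-1}(I^{\sigma^n})$, argued pointwise.

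The argument you propose to present instead is a genuine shortcut. The case $n=1$ of that same implication already gives $\phi(I)\subseteq I^\sigma$, so $I$ is itself an invariant subvariety. Wildness together with $I\neq X$ then gives $I=\emptyset$ immediately. This bypasses Lemma~\ref{emptyint} and the iterates $\phi^{(n)}$ altogether: for an invariant $Z$ the intersection in that lemma is just $Z$, so the lemma adds nothing here. It also isolates a fact that does not use wildness: the indeterminacy locus of any equivariant rational map between inversive $\sigma$-varieties is invariant, and indeed satisfies $\phi(I)=I^\sigma$.

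Both versions rest on the same inputs. These are that the domain of definition is intrinsic to the rational map, and that it is respected by $\sigma$-transform and by composition with the isomorphisms $\phi$ and $\psi$. The paper's detour buys nothing extra for this corollary; Lemma~\ref{emptyint} is simply the more general statement about arbitrary proper closed $Z$.
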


\begin{proof}
Let~$I\subset X$ be the indeterminacy locus of~$f$.
By Lemma~\ref{emptyint},
$$\displaystyle \bigcap_{n\geq 0}(\phi^{(n)})^{-1}(I^{\sigma^n})=\emptyset.$$
Hence, given $b\in X$, there is $n\geq 0$ such that $\phi^{(n)}(b)\notin I^{\sigma^n}$.
It follows that $f^{\sigma^n}$ is defined at $\phi^{(n)}(b)$.
But $f:(X,\phi^{(n)})\dto(Y,\psi^{(n)})$ is an equivariant rational map of inversive $\sigma^n$-varieties, and hence
$$\xymatrix{
X\ar[rr]^{\phi^{(n)}}\ar@{-->}[d]_{f} && X^{\sigma^n}\ar@{-->}[d]^{f^{\sigma^n}}\\
Y\ar[rr]_{\psi^{(n)}} && Y^{\sigma^n}
}$$
commutes.
So $f=(\psi^{(n)})^{-1}f^{\sigma^n}\phi^{(n)}$ is defined at $b$.
That is, $f$ is regular.
\end{proof}

\bigskip
\subsection{A counterexample}
\label{example}
We now exhibit a wild (nonautonomous) inversive $\sigma$-variety structure on the projective line.
In particular, we cannot expect the naive analogue of the wild automorphism conjecture to hold of $\sigma$-varieties.

Our construction will be over the difference field $(K,\sigma)$ where $K:=\CC(t)^{\alg}$ and $\sigma$ is an automorphism satisfying $\sigma(t)=t+1$.
We first observe:

\begin{lemma}
\label{rat}
If $u\in\CC(t)^{\alg}$ is such that $\sigma^n(u)\in\CC(t,u)$ for some $n\geq 1$ then $u\in \mathbb{C}(t)$.
\end{lemma}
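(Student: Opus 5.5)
The plan is to use ramification at infinity (equivalently, Puiseux expansions in $1/t$) together with the fact that $\sigma$ acts on $\CC(t)^{\alg}$ as "substitution $t\mapsto t+1$". First I would note that this needs justification, since $\sigma$ is only specified on $\CC(t)$. Set $F:=\CC(t,u)$ and $m:=[F:\CC(t)]$. Because $\sigma$ restricts to an automorphism of $\CC(t)$, it maps $F$ isomorphically onto $\sigma(F)=\CC(t,\sigma(u))$. The hypothesis gives $\sigma^n(F)=\CC(t,\sigma^n(u))\subseteq F$, and both fields have degree $m$ over $\CC(t)$, so $\sigma^n(F)=F$. Hence $\tau:=\sigma^n|_F$ is an automorphism of $F$ extending $t\mapsto t+n$ on $\CC(t)$. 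The goal becomes: a finite extension $F/\CC(t)$ admitting such an automorphism must be trivial.

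The main step is a genus and ramification argument. Let $C$ be the smooth projective curve with function field $F$, and $\pi:C\to\PP^1$ the map given by $t$. Then $\tau$ induces an automorphism $\alpha$ of $C$ covering the translation $T(x)=x+n$ of $\PP^1$, so $\pi\circ\alpha=T\circ\pi$ (up to the usual contravariance). It follows that the branch locus $B\subset\PP^1$ of $\pi$ is $T$-invariant. This is the crucial point: $T$ has infinite orbits on $\AA^1=\CC$, and $B$ is finite, so $B\subseteq\{\infty\}$. Now $\pi$ is a finite cover of $\PP^1$ branched at most over one point. Since $\AA^1$ is simply connected (equivalently, apply Riemann--Hurwitz: $2g_C-2=-2m+\sum(e_P-1)$, where the ramification over $\infty$ contributes at most $m-1$), we get $2g_C-2\le -m-1$. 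This forces $g_C=0$ and $m=1$. Therefore $F=\CC(t)$, and so $u\in\CC(t)$.

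I expect the main obstacle to be making the geometric translation careful rather than the inequality itself. Three points need attention: that $\tau$ really induces an automorphism of $C$ compatible with $T$; that branch points are permuted, which requires $\alpha$ to be a morphism over $T$ so that ramification indices are preserved; and that we work over $\CC$ rather than $\CC(t)$. The branch-locus invariance is the heart of the argument. If one prefers to avoid curves, an alternative is to use the discriminant: take the minimal polynomial of $u$ over $\CC(t)$. Its splitting field $N$ is also stable under $\sigma^n$, because $\sigma^n$ permutes the conjugates of $u$ modulo the fact that $\sigma^n$ preserves $F$ and every conjugate field. The finite places of $\CC(t)$ ramified in $N$ then form a $T$-stable finite set, hence are empty, and the Riemann--Hurwitz bound gives the same conclusion. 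I would go with the curve argument as written.
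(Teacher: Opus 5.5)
Your proof is correct, and it takes a genuinely different route from the paper.

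The paper also passes to the curve with function field $L=\CC(t,u)$ and the automorphism $\tau$ induced by $\sigma^n$, but then argues by classification:
\begin{itemize}
\item Since $\tau$ has infinite order, the curve has genus $0$ or $1$.
\item Genus $1$ is excluded because $\tau$ covers a map of $\PP^1$ with a fixed point ($\infty$). So some $\tau^m$ fixes a point, making it a group automorphism of an elliptic curve and hence of finite order.
\item In genus $0$ one writes $L=\CC(h)$ with $\sigma^n(h)=ch$ or $\sigma^n(h)=h+1$ (normal forms in $\pgl_2$), and writes $t=F(h)$. Differentiating $F(ch)=F(h)+n$ rules out the first case. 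Differentiating $F(h+1)=F(h)+n$ shows $F$ is affine-linear in the second, so $L=\CC(t)$.
\end{itemize}

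You instead note that the cover $C\to\PP^1$ given by $t$ is intertwined with the translation $x\mapsto x+n$. Hence its branch locus is a finite translation-invariant set, so it lies in $\{\infty\}$. Riemann--Hurwitz then gives $2g-2\le -m-1$, forcing $g=0$ and $m=1$.

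What your route buys:
\begin{itemize}
\item It avoids the finiteness of automorphism groups in genus $\geq 2$, the elliptic-curve case, and the $\pgl_2$ normal forms.
\item It gets $[F:\CC(t)]=1$ directly, without first identifying the curve.
\item It isolates why the lemma holds: $\infty$ is the translation's only finite orbit. For $t\mapsto ct$ the set $\{0,\infty\}$ can be a branch locus, and indeed the Kummer fields $\CC(t^{1/m})$ are stable, so the analogous statement fails there.
\end{itemize}
The paper's route is more computational but needs nothing beyond elementary manipulations once the genus is pinned down.

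You also supply the degree argument showing $\sigma^n(F)=F$, which the paper only asserts. The one garbled spot is your parenthetical justification in the discriminant alternative. There the splitting field $N$ is $\sigma^n$-stable simply because it is the Galois closure of $F$ over $\CC(t)$ and $\sigma^n(F)=F$. You do not rely on that alternative, so it does not affect the proof.
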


\begin{proof}
The assumptions imply that $\sigma^n$ restricts to an infinite-order $\mathbb{C}$-algebra automorphism of the field $L = \mathbb{C}(t, u)$. Let $X$ be the smooth projective curve with function field $\mathbb{C}(X) = L$. Since $\sigma^n$ induces an infinite order automorphism $\tau$ of $X$, $X$ must have genus zero or one. 

We rule out the case of genus one.
The inclusion $\mathbb{C}(t)\subseteq L$ also gives us a $\tau$-equivariant finite-to-one map $X\to \mathbb{P}^1$.
Since the induced map on $\mathbb{P}^1$ has a fixed point, we see that $\tau$ must permute the preimage of this fixed point, and $\tau^m$ has a fixed point for some $m\ge 1$.  
If $X$ has genus one then $X$ is an elliptic curve, and we may choose the identity to be a fixed point of $\tau^m$.
So $\tau^m$ is necessarily a group automorphism of $X$ and hence finite order, a contradiction.

Thus $X=\mathbb P^1$ and $L = \mathbb{C}(h)$ for some $h\in L$ transcendental over~$\CC$.
Moreover, we may choose
$h$ so that $\sigma^n(h)$ is either $ch$ with $c$ nonzero or $\sigma^n(h)= h+1$ (depending on whether the matrix in $\pgl_2(\CC)$ representing~$\sigma^n$ has distinct eigenvalues or not).
We claim that the former case cannot hold. Indeed, if $\sigma^n(h)=ch$ for some nonzero $c$, and writing $t=F(h)$ for some nonconstant rational function~$F$ over~$\mathbb{C}$, we see that $t+n=\sigma^n(F(h))=F(ch)$.  
Let~$\delta$ be the derivation of $L$ induced by differentiation with respect to $h$. Then applying $\delta$ to the equations $t=F(h)$ and $t+n=F(ch)$ we see that
$F'(h)=\delta(t) =F'(ch)c$.
In particular, $F'(h)h$ is fixed by $\sigma^n$ and is thus in~$\mathbb{C}$.  Since $F$ itself is nonconstant, we see that $F'(h)=\alpha/h$ for some nonzero complex number~$\alpha$, which is impossible since $F$ is a rational function.  

It follows that $\sigma^n(h)=h+1$ and so, as in the preceding case, we can write $t=F(h)$ and $t+n=F(h+1)$ for some nonconstant rational function~$F$.
Applying the derivation $\delta$, as before, gives
$F'(h) = \delta(t) = F'(h+1)$, so that $F'(h)$, being fixed by $\sigma^n$, is constant. It follows that $F(h)=\alpha h+\beta$ for some complex numbers $\alpha,\beta$ with $\alpha$ nonzero.
But this means that $\mathbb{C}(t)=\mathbb{C}(h)=L$ and so $u\in \mathbb{C}(t)$, as claimed.
\end{proof}

Now, let $\phi$ be the automorphism of the projective line that takes $[x:y]$ to $[y:x+ty]$.
Since $(\mathbb P^1)^\sigma=\mathbb P^1$, we do have that $(\mathbb P^1,\phi)$ is an inversive $\sigma$-variety over $(K,\sigma)$.
We will show that it is wild.
As an element of $\pgl_2$, the automorphism~$\phi$ is represented by the matrix
$$
A:=
\begin{bmatrix}
0&1\\
1&t
\end{bmatrix}.
$$
Moreover, for each $n\geq 1$, $\phi^{(n)}$ in $\pgl_2$ is represented by
$$B_n:=A^{\sigma^{n-1}}A^{\sigma^{n-2}}\cdots A^\sigma A.$$
Let $a_n,b_n,c_n,d_n\in \mathbb C[t]$ be such that
$
B_n=
\begin{bmatrix}
a_n&b_n\\
c_n&d_n
\end{bmatrix}
$.
A simple induction gives the recurrences:
$$a_{n+1} = c_n,  b_{n+1} = d_n, c_{n+1} = a_n + (t+n) c_n,    d_{n+1} =  b_n + (t+n)d_n$$ for $n\ge 1$, which in turn
imply the following degrees for the polynomial entries:
$\deg(a_n) = n-2, \deg(b_n)= \deg(c_n) = n-1,$ and $\deg(d_n) = n$ for all $n\ge 1$, where we take the degree of the zero polynomial to be $-1$.

\begin{lemma}
\label{norat}
Let $n\ge 1$. 
There is no rational function $u(t)$ for which we have $B_n \cdot [u:1] = [\sigma^n(u):1]$.
\end{lemma}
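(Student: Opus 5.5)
The plan is to clear denominators and run a degree count, using that $\det B_n$ is a nonzero constant. Suppose $u\in\CC(t)$ satisfies $B_n\cdot[u:1]=[\sigma^n(u):1]$.

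First I dispose of $u=0$. In that case the condition reads $[b_n:d_n]=[0:1]$, which forces $b_n=0$. But the recurrences give $b_1=1$ and $b_{n+1}=d_n$, so $b_n$ is monic of degree $n-1$ and in particular nonzero.

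So assume $u\neq 0$ and write $u=p/q$ with $p,q\in\CC[t]$ nonzero and coprime. Put $r:=\deg p$ and $s:=\deg q$. Since $\sigma$ acts on $\CC[t]$ by $t\mapsto t+1$, the polynomials $p^{\sigma^n},q^{\sigma^n}$ are again coprime, of degrees $r,s$. The hypothesis becomes the polynomial identity
$$(a_np+b_nq)\,q^{\sigma^n}=p^{\sigma^n}\,(c_np+d_nq).$$

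Next I extract a scalar factor. Coprimality gives $p^{\sigma^n}\mid a_np+b_nq$ and $q^{\sigma^n}\mid c_np+d_nq$. Substituting back into the identity shows the two cofactors agree. Hence there is a single $\lambda\in\CC[t]$ with
$$P:=a_np+b_nq=\lambda p^{\sigma^n},\qquad Q:=c_np+d_nq=\lambda q^{\sigma^n}.$$
Since $\det A=-1$, we have $\det B_n=(-1)^n$. Inverting via the adjugate gives
$$p=\pm(d_nP-b_nQ),\qquad q=\pm(-c_nP+a_nQ).$$
Both right-hand sides are $\lambda$ times a polynomial, so $\lambda$ divides $\gcd(p,q)=1$, and $\lambda$ is a nonzero constant. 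Consequently $\deg P=r$ and $\deg Q=s$.

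Finally I compare degrees using $\deg a_n=n-2$, $\deg b_n=\deg c_n=n-1$, $\deg d_n=n$. Here $a_n=0$ when $n=1$, and none of the other entries vanish.
\begin{itemize}
\item[(i)] If $s\geq r$, then in $Q=c_np+d_nq$ the term $d_nq$ has degree $n+s$, while $c_np$ has degree $n-1+r<n+s$. So $\deg Q=n+s>s$, a contradiction.
\item[(ii)] If $r>s$, I use the inverse formula $q=\pm(-c_nP+a_nQ)$ rather than $P,Q$ directly. Here $c_nP$ has degree $n-1+r$, and $a_nQ$ has degree at most $n-2+s<n-1+r$ (or is zero). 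So $\deg q=n-1+r\geq r>s$, again a contradiction.
\end{itemize}
Thus no such $u$ exists.

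The main obstacle is case (ii). A direct count on $P$ and $Q$ fails when $r=s+1$: there the leading terms of $a_np$ and $b_nq$ can cancel, and likewise those of $c_np$ and $d_nq$. The cancellation is unavoidable because the leading $2\times 2$ coefficient matrix is $\begin{bmatrix}1&1\\1&1\end{bmatrix}$, which is singular, as it must be for a constant determinant. The fix is to pass to the inverse matrix once $\lambda$ is known to be constant. In the expression for $q$, the term $c_nP$ strictly dominates $a_nQ$, so no cancellation can occur. For this reason the step showing $\lambda$ is constant is the crux, and I would check it carefully.
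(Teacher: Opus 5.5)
Your proof is correct and follows essentially the same route as the paper. You clear denominators, use the adjugate together with $\det B_n=(-1)^n$ and coprimality to force the common factor to be a nonzero constant, and then split on the relative degrees of numerator and denominator, using a forward equation in one case and an inverse equation in the other. The differences are cosmetic: you explicitly rule out $u=0$ via $b_n\neq 0$, which the paper only asserts, and in the case $\deg p>\deg q$ you use the inverse equation for $q$ where the paper uses the one for $p$.
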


\begin{proof}
Suppose, towards a contradiction, that we have such a rational function $u(t)$.
Then $u(t)$ is necessarily nonzero and we write $u(t)=P(t)/Q(t)$ with $P, Q$ nonzero coprime polynomials. We have the equation:
\begin{equation*} P(t+n)(c_n P(t) + d_n Q(t)) = Q(t+n)(a_n P(t) + b_n Q(t)).
\end{equation*}
Since $P$ and $Q$ are coprime, $P(t+n)$ must divide $a_nP(t)+b_nQ(t)$ and $Q(t+n)$ divides $c_n P(t)+d_nQ(t)$; hence
 there is a nonzero polynomial $e(t)$ such that
\begin{equation*} 
e(t) \begin{bmatrix} P(t+n) \\ Q(t+n) \end{bmatrix} = \begin{bmatrix} a_n(t) & b_n(t) \\ c_n(t) & d_n(t) \end{bmatrix} \begin{bmatrix} P(t) \\ Q(t) \end{bmatrix}
\end{equation*}
Multiplying by the adjugate matrix, we obtain:
\begin{equation*}
e(t) \begin{bmatrix} d_n(t) & -b_n(t) \\ -c_n(t) & a_n(t) \end{bmatrix} \begin{bmatrix} P(t+n) \\ Q(t+n) \end{bmatrix} = \det(B_n) \begin{bmatrix} P(t) \\ Q(t) \end{bmatrix}
\end{equation*}
Since $\det(A) = -1$, we have $\det(B_n) = (-1)^n$. Because $e(t)$ must divide both $(-1)^n P(t)$ and $(-1)^n Q(t)$, and $P, Q$ are coprime, $e(t) = \gamma \in \mathbb{C}^*$. Hence we obtain the equations
\begin{align} 
\gamma P(t+n) &= a_n P(t) + b_n Q(t)\\
\gamma Q(t+n) &= c_n P(t) + d_n Q(t) \label{eq:Q_step} \\
(-1)^n \gamma^{-1} P(t) &= d_n P(t+n) - b_n Q(t+n) \label{eq:P_inv} \\
(-1)^n \gamma^{-1} Q(t) &= -c_n P(t+n) + a_n Q(t+n)
\end{align}
We now show that there are no solutions to this system by looking at two cases.
First, if $\deg P \ge \deg Q$, Equation \eqref{eq:P_inv} gives that
$$ \deg(d_n P(t+n)) = \deg(b_n Q(t+n) + (-1)^n \gamma^{-1} P(t)) $$
The left side has degree $n + \deg P$, while the right side has degree at most $$\max(n-1 + \deg P, \deg P),$$ a contradiction for $n \ge 1$.

Next, if $\deg P < \deg Q$, Equation \eqref{eq:Q_step} gives:
$$ \deg(d_n Q(t)) = \deg(\gamma Q(t+n) - c_n P(t)) $$
The left side has degree $n + \deg Q$, while the right side has degree at most
$$\max(\deg Q, n-1 + \deg P)\leq\max(\deg Q, n-2+ \deg Q),$$
again a contradiction.
\end{proof}

\begin{corollary}
\label{cor:p1wild}
$(\mathbb P^1,\phi)$ is wild as a $\sigma$-variety over $(K,\sigma)$.
\end{corollary}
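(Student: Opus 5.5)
Suppose, towards a contradiction, that $Y\subsetneq\mathbb P^1$ is a proper invariant subvariety. Since $\mathbb P^1$ is a curve, $Y$ is a nonempty finite set of closed points over $K$, and invariance, $\phi(Y)\subseteq Y^\sigma$, is an equality because $\phi$ is a bijection and $|Y^\sigma|=|Y|$. The plan is to pass from this finite invariant set to a single point that is periodic for a suitable iterate, and then apply Lemma~\ref{rat} and Lemma~\ref{norat}.

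First we produce a periodic point. Put $N:=|Y|$. Note that $\phi^{(n)}(Y)=Y^{\sigma^n}$ for all $n$. Since $K=\CC(t)^{\alg}$, the points of $Y$ are $K$-rational; write them as $p_1,\dots,p_N$. Both $\phi$ and $\sigma$ act on $\mathbb P^1(K)$. The composite $\sigma^{-1}\circ\phi$, with $\sigma$ applied coordinatewise, maps the finite set $Y(K)$ bijectively to itself. Hence some power fixes a chosen point $p\in Y(K)$: there is $n\ge 1$ (for instance $n=N!$) with $\phi^{(n)}(p)=\sigma^n(p)$. To justify this, check that $(\sigma^{-1}\phi)^n=\sigma^{-n}\phi^{(n)}$ on $K$-points, which follows from $\sigma\circ\phi=\phi^\sigma\circ\sigma$ on $K$-points.

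Next we reduce to an affine coordinate. If $p=[1:0]$, then $\phi^{(n)}(p)=[a_n:c_n]$. This equals $\sigma^n(p)=[1:0]$ only if $c_n=0$, and that is impossible because $\deg c_n=n-1\ge 0$. Hence $p=[u:1]$ with $u\in K$, and $B_n\cdot[u:1]=[\sigma^n(u):1]$. This identity expresses $\sigma^n(u)=(a_nu+b_n)/(c_nu+d_n)$, so $\sigma^n(u)\in\CC(t,u)$. The denominator $c_nu+d_n$ cannot vanish, since otherwise the image would be $[1:0]\neq[\sigma^n(u):1]$. By Lemma~\ref{rat}, $u\in\CC(t)$. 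This contradicts Lemma~\ref{norat}. So no proper invariant subvariety exists, and $(\mathbb P^1,\phi)$ is wild.

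The only delicate step is the periodicity bookkeeping: verifying that $\sigma^{-1}\circ\phi$ really permutes $Y(K)$, and that its $n$-th power is $\sigma^{-n}\circ\phi^{(n)}$. This follows from $\phi(Y)=Y^\sigma$ and $\sigma(Y(K))=Y^\sigma(K)$, together with the definition $\phi^{(n)}=\phi^{\sigma^{n-1}}\circ\cdots\circ\phi$. The rest is a direct application of the two lemmas.
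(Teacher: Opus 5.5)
Your proof is correct and follows the paper's own argument. You pass from a finite invariant set to a point $s$ with $\phi^{(n)}(s)=\sigma^n(s)$, rule out $s=[1:0]$ using $c_n\neq 0$, and then combine Lemma~\ref{rat} with Lemma~\ref{norat}. The paper leaves implicit two details that you write out: the permutation argument via $\sigma^{-1}\circ\phi$, and the nonvanishing of $c_nu+d_n$.
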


\begin{proof}
Suppose, toward a contradiction, that $(\PP^1,\phi)$ is not wild.
Then there is a nonempty finite set  $S\subseteq \mathbb P^1(K)$ such that $\phi(S)=S^\sigma$.
In particular, there is $s\in\PP^1(K)$ and $n\geq 1$ such that $\phi^{(n)}(s)=\sigma^n(s)$.
If $s=[1:0]$ then we get 
$[1:0]=\sigma^n(s)=\phi^{(n)}(s)=[a_n:c_n]$,
which contradicts the fact that $c_n$ has degree $n-1$ for all $n\ge 1$.
Hence, we must have that $s$ is of the form $[u:1]$ for some $u\in K=\CC(t)^{\alg}$.
That $\phi^{(n)}(s)=\sigma^n(s)$ implies that $B_n \cdot [u:1] = [\sigma^n(u):1]$.
In other words,
$\sigma^n(u) = \frac{a_n u+b_n}{c_nu+d_n}$.
In particular, $\sigma^n(u)\in\CC(t,u)$, and Lemma~\ref{rat} implies that $u\in \CC(t)$.
But that contradicts Lemma~\ref{norat}.
\end{proof}

Note that $(\PP^1,\phi)$ is isotrivial (with binding group $\pgl_2$).
In fact, every automorphism of $\PP^1$ is isotrivial:

\begin{lemma}
Suppose $\phi$ is an automorphism of $\PP^1$ over an algebraically closed field~$K$, and $\sigma$ is an automorphism of~$K$.
Then $(\PP^1,\phi)$ is isotrivial as a $\sigma$-variety over $(K,\sigma)$.
\end{lemma}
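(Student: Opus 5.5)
We prove isotriviality directly from the definition: we find a difference field extension $(L,\sigma)$, a generic point $a$ of $(\PP^1_L,\phi_L)$, and a tuple $c$ of fixed elements with $L(a)=L(c)$. The strategy imitates Example~\ref{ex:translation}, with $\pgl_2$ in place of $G$. Let $\phi$ be represented by a matrix $A\in\operatorname{GL}_2(K)$. The goal is to trivialise the twisted linear dynamics by a ``fundamental solution'' $H$ with $\sigma(H)=AH$.

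\emph{Step 1: the $\sigma$-group.} Consider the $\sigma$-variety $(\operatorname{GL}_2,\lambda_A)$ over $(K,\sigma)$, where $\lambda_A:\operatorname{GL}_2\to\operatorname{GL}_2=\operatorname{GL}_2^\sigma$ is $g\mapsto Ag$. This is an isomorphism, so it is an inversive $\sigma$-variety, and $\operatorname{GL}_2$ is absolutely irreducible. By the Zariski density of $\sharp$-points in the universal domain, there is $H\in(\operatorname{GL}_2,\lambda_A)^\sharp$, that is, $H\in\operatorname{GL}_2(\U)$ with $\sigma(H)=AH$. Set $L:=K(H)$, where $K(H)$ means $K$ adjoined the entries of $H$. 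Then $(L,\sigma)$ is a difference field, since $\sigma(H)=AH$ has entries in $L$. It is inversive because $\sigma^{-1}(H)=\sigma^{-1}(A)^{-1}H$. If needed we may replace $L$ by $L^{\alg}$.

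\emph{Step 2: trivialisation.} Let $\rho:\PP^1_L\to\PP^1_L$ be the automorphism given by $H^{-1}$. Then $\rho^\sigma$ is given by $\sigma(H)^{-1}=H^{-1}A^{-1}$, so $\rho^\sigma\circ\phi_L$ is given by $H^{-1}A^{-1}A=H^{-1}$, which is $\rho$. Hence $\rho:(\PP^1_L,\phi_L)\to(\PP^1_L,\id)$ is an isomorphism of $\sigma$-varieties over $(L,\sigma)$. Note that $\PP^1$ is defined over $\mathbb{Q}\subseteq\fix(L,\sigma)$.

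\emph{Step 3: extract $c$.} Take a generic point $a$ of $(\PP^1_L,\phi_L)$, and write $a=[\alpha:1]$ with $\alpha$ transcendental over $L$. Put $c:=\rho(a)$, written in affine coordinate $\gamma$. Then $\sigma(c)=\rho^\sigma(\sigma(a))=\rho^\sigma(\phi_L(a))=\rho(a)=c$, so $\gamma\in\fix(\U,\sigma)$. Since $\rho$ is a M\"obius transformation over $L$, we get $L(\alpha)=L(\gamma)$, which is the required $L(a)=L(c)$.

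The only delicate point is Step~1: the existence of a $\sharp$-point of $(\operatorname{GL}_2,\lambda_A)$, which must be produced in $\U$ rather than merely in some extension. This is handled by the density fact quoted earlier, applied to that inversive $\sigma$-variety. Alternatively, one can use saturation of $\U$ together with the fact that the system $\sigma(X)=AX$, $\det X\neq0$ is solvable in some difference field extension. Algebraic closedness of $K$ is not needed, except that lifting $\phi\in\pgl_2(K)$ to $\operatorname{GL}_2(K)$ is immediate there.
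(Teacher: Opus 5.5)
Your proof is correct; it produces the same kind of trivialising map as the paper, but by a different existence mechanism.

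\textbf{The paper's route.} It stays on $\PP^1$. It picks three distinct points $u_1,u_2,u_3\in(\PP^1,\phi)^\sharp$ and lets $A_{u_1,u_2,u_3}\in\pgl_2(\U)$ be the unique M\"obius map sending them to $0,1,\infty$. Since $A^\sigma_{u_1,u_2,u_3}\circ\phi$ also sends the $u_i$ to $0,1,\infty$, uniqueness gives $A_{u_1,u_2,u_3}=A^\sigma_{u_1,u_2,u_3}\circ\phi$. This is an equivariant isomorphism to $(\PP^1,\id)$ over $K(u_1,u_2,u_3)$.

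\textbf{Your route.} You take a $\sharp$-point $H$ of the $\sigma$-group $(\operatorname{GL}_2,\lambda_A)$, that is, a fundamental solution of $\sigma(H)=AH$. This is the direct nonautonomous analogue of Example~\ref{ex:translation}.

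\textbf{How they relate.} The columns $h_1,h_2$ of your $H$ satisfy $\sigma(h_j)=Ah_j$. So $[h_1],[h_2],[h_1+h_2]$ are three distinct points of $(\PP^1,\phi)^\sharp$, and your $\rho=H^{-1}$ sends them to the fixed points $[1:0],[0:1],[1:1]$. Conversely, the paper's $A_{u_1,u_2,u_3}^{-1}$ is a $\sharp$-point of $(\pgl_2,\lambda_\phi)$. What differs is the key input: you use density of $\sharp$-points on the group, while the paper uses density on $\PP^1$ together with sharp $3$-transitivity of $\pgl_2$.

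\textbf{What each buys.} Your version carries over verbatim to any $\phi$ induced by a $K$-point of a connected algebraic group acting on the variety, for instance every automorphism of $\PP^n$ via $\operatorname{GL}_{n+1}$. The only cost is the lift to $\operatorname{GL}_2$, which is harmless; you could also work with $\pgl_2$ directly. The paper's version is specific to $\PP^1$. In exchange, it exhibits the trivialising field as generated by three realisations of $(\PP^1,\phi)^\sharp$ itself, which is the form internality usually takes model-theoretically.
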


\begin{proof}
An argument for this appears, for example, in the proof of~\cite[Proposition~7.12]{qfnmdeg}, that we repeat here.
Work in a model $(\U,\sigma)$ of $\acfa_0$ extending $(K,\sigma)$, and fix three distinct points $u_1,u_2,u_3$ of $(\PP^1,\phi)^\sharp$.
Let $A_{u_1,u_2,u_3}$ denote the unique element of $\pgl_2(\U)$ that sends $u_1,u_2,u_3$ to $0,1,\infty$, respectively.
(We are using that the action of $\pgl_2$ on $\PP^1$ is uniquely $3$-transitive.)
Then, as $\phi(u_i)=\sigma(u_i)$ for each $i=1,2,3$, we have that 
 $A_{u_1,u_2,u_3}=A_{\sigma(u_1),\sigma(u_2),\sigma(u_3)}\phi=A^\sigma_{u_1,u_2,u_3}\phi$ as elements of $\pgl_2(\U)$.
 That is, $A_{u_1,u_2,u_3}$ is an equivariant isomorphism from $(\PP^1,\phi)$ to $(\PP^1,\id)$ over the difference field $(K(u_1,u_2,u_3),\sigma)$.
\end{proof}

So, even restricting to isotrivial $\sigma$-varieties, the nonautonomous extension of the wild automorphism conjecture fails.

\bigskip
\section{Homogeneity}

\noindent
Isotriviality does, however, even in the nonautonomous case, at least in the absence of nonconstant invariant rational functions, imply homogeneity of the $\sigma$-variety, in a sense that we make precise below.
That this is true birationally is part of the binding group theorem of~\cite{qfint}, but we will show here that in the presence of wildness it works in the category of regular maps.

Fix an algebraically closed characteristic~$0$ inversive difference field $(K,\sigma)$.

\begin{definition}
By a {\em $\sigma$-group} over $(K,\sigma)$ we mean an algebraic group~$G$ over~$K$, equipped with an isomorphism $\rho:G\to G^\sigma$ of algebraic groups.
We say that a $\sigma$-variety $(X,\phi)$ is {\em homogeneous for $(G,\rho)$} if there is a  faithful transitive algebraic group action $\theta:G\times X\to X$ which is equivariant for $\rho\times\phi$ on $G\times X$ and $\phi$ on $X$.
This latter condition means that
\begin{equation*}
\xymatrix{
G\times X\ar[rr]^\theta\ar[d]_{(\rho\times\phi)} && X\ar[d]^\phi\\
G^\sigma\times X^\sigma\ar[rr]_{\theta^\sigma} && X^\sigma
}
\end{equation*}
commutes.
We call $(X,\phi)$ {\em homogeneous} if it is homogeneous for some $\sigma$-group.
\end{definition}

\begin{remark}
Since we allow the possibility that~$G$ is not connected, there is some inconsistency in calling $(G,\rho)$ a $\sigma$-group; after all, it is not technically even a $\sigma$-variety.
However, $\rho$ will restrict to an isomorphism $G^0\to(G^0)^\sigma$, where $G^0$ is the connected component of identity of~$G$, and so $(G^0,\rho|_{G^0})$ is an inversive $\sigma$-variety.
\end{remark}

Actually we will need a weaker notion:

\begin{definition}
We say that a $\sigma$-variety $(X,\phi)$ is {\em pre-homogeneous for} a $\sigma$-group $(G,\rho)$ over $(K,\sigma)$ if there is a  rational map $\theta:G\times X\dto X$ such that:
\begin{itemize}
\item
$G$ is an algebraic group of birational transformations of~$X$.
This means that $\theta(g,-)=:\theta_g$ is a birational transformation of~$X$ for each $g\in G$,  and that $g\mapsto\theta_g$ is an injective group homomorphism from $G$ into $\bir(X)$.
\item
The action is generically transitive.
This means that there is $a\in X(\U)$ Zariski generic over~$K$ such that
$Ga:=\{\theta_g(a):g\in G, \theta_g\text{ defined at }a\}$
is Zariski dense in $X$.
\item
$\theta$ is equivariant in the sense that
\begin{equation}
\label{phirho}
\xymatrix{
G\times X\ar@{-->}[rr]^\theta\ar[d]_{(\rho\times\phi)} && X\ar[d]^\phi\\
G^\sigma\times X^\sigma\ar@{-->}[rr]_{\theta^\sigma} && X^\sigma
}
\end{equation}
commutes.
\end{itemize}
Again, $(X,\phi)$ is {\em pre-homogeneous} if it is pre-homogeneous for some $\sigma$-group.
\end{definition}

\begin{remark}
Note that if $(X,\phi)$ is pre-homogeneous for $(G,\rho)$ then $(X,\phi)$ is pre-homogeneous for $(G^0,\rho|_{G^0})$, where $G^0$ is the connected component of identity.
\end{remark}

\begin{example}
\label{wildiso-prehom}
If $(X,\phi)$ is an isotrivial $\sigma$-variety over $(K,\sigma)$ that admits no nonconstant invariant rational functions then it is pre-homogeneous.
\end{example}

\begin{proof}
As mentioned earlier, it is shown in~\cite{qfint} that isotriviality alone gives rise to a binding group $(G,\rho)$ for $(X,\phi)$, satisfying all the properties of pre-homogeneity above except that the action need not be generically transitive.
However, by~\cite[Proposition~3.4 and Lemma~5.5]{qfint}, as there are no nonconstant invariant rational functions, $(G,\rho)^\sharp$ will act transitively on the generic points of $(X,\phi)$.
As these are Zariski dense in~$X$, the action of~$G$ on~$X$ is generically transitive.
\end{proof}

Suppose $(X,\phi)$ is a projective inversive $\sigma$-variety.
How far is pre-homogeneity from homogeneity?
Weil's regularisation theorem comes close to bridging the gap.
By a {\em projective regularisation} of $(X,\phi)$ we mean a projective variety $\widehat X$ over~$K$, a birational map $\widehat\phi:\widehat X\dto\widehat X^\sigma$, and birational map $\pi:X\dto\widehat X$ such that:
\begin{itemize}
\item $\pi$ regularises the action of~$G$ in the sense of Weil~\cite{weil}.
That is, there is a regular algebraic group action 
$\widehat\theta:G\times\widehat X\to \widehat X$ such that
\begin{equation}
\label{weil}
\xymatrix{
G\times X\ar@{-->}[rr]^\theta\ar@{-->}[d]_{(\id\times\pi)} && X\ar@{-->}[d]^\pi\\
G\times \widehat X\ar[rr]_{\widehat\theta} && \widehat X
}
\end{equation}
commutes.
\item
$\pi:(X,\phi)\dto (\widehat X,\widehat\phi)$ is a birational equivalence of $\sigma$-varieties.
That is, the following diagram commutes:
\begin{equation}
\label{diag:hatphi}
\xymatrix{
X\ar[rr]^\phi\ar@{-->}[d]_\pi && X^\sigma\ar@{-->}[d]^{\pi^\sigma}\\
\widehat X\ar@{-->}[rr]_{\widehat\phi} && \widehat X^\sigma
}
\end{equation}
\end{itemize}
A projective regularisation always exists by Weil's theorem; see~\cite{weil}, or rather~\cite{zaitsev} in this setting where~$G$ need not be connected.
Actually, Weil's theorem gives us~(\ref{weil}) and then we use~$\pi$ to define~$\widehat\phi$ so that~(\ref{diag:hatphi}) holds.
The trade-off is that while $\widehat\theta$ is now a regular (as opposed to only rational) group action, the ``dynamics" $\widehat\phi$ is, in principle, no longer an isomorphism but just a birational map.
However, in the wild case such regularisations are much better behaved:

\begin{proposition}
\label{prop:wildreg}
A wild projective inversive $\sigma$-variety that is pre-homogeneous for $(G,\rho)$ is homogeneous for $(G,\rho)$.
\end{proposition}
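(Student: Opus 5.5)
The plan is to combine Weil regularisation with wildness: wildness (through Lemma~\ref{emptyint}, or simply by definition) will force the "bad locus" of the regularisation map to be empty. Fix a projective regularisation $\pi:X\dto\widehat X$, $\widehat\theta:G\times\widehat X\to\widehat X$, $\widehat\phi:\widehat X\dto\widehat X^\sigma$ as above, which exists by Weil's theorem (in the form of~\cite{zaitsev}).

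\emph{Step 1: an open orbit.} Since $\pi$ is birational and satisfies~(\ref{weil}), generic transitivity of $\theta$ transfers to $\widehat\theta$. Hence the regular action of $G$ on $\widehat X$ has a Zariski dense orbit, which is open by Chevalley and the standard orbit lemma. Call it $U\subseteq\widehat X$. Applying $\sigma$, $U^\sigma$ is the dense open $G^\sigma$-orbit in $\widehat X^\sigma$.

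\emph{Step 2: $\widehat\phi$ is biregular on $U$.} Combine diagrams~(\ref{phirho}), (\ref{weil}) and~(\ref{diag:hatphi}). This shows that $\widehat\phi$ is $\rho$-equivariant as a birational map, namely $\widehat\phi\circ\widehat\theta_g=\widehat\theta^\sigma_{\rho(g)}\circ\widehat\phi$ for $g\in G$. The domain of definition of $\widehat\phi$ intersected with $U$ is therefore a nonempty $G$-invariant open subset of the homogeneous space $U$, hence all of $U$. Its image is a $G^\sigma$-invariant dense set, so it lies in the orbit $U^\sigma$. The same argument applied to $\widehat\phi^{-1}$ shows that $\widehat\phi|_U:U\to U^\sigma$ is an isomorphism. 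In the same way, the birational map $\pi^{-1}|_U:U\dto X$ intertwines the regular $G$-action on $U$ with the birational $G$-action $\theta$ on $X$. I expect this to be the delicate point, because $\theta$ is only rational, so one cannot immediately conclude that $\pi^{-1}$ is regular on all of $U$. I would handle this on the $X$ side, in Step 3, rather than trying to prove it directly.

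\emph{Step 3: the good locus is invariant.} Let $W\subseteq X$ be the set of points $x$ at which $\pi$ is defined, satisfies $\pi(x)\in U$, and is a local isomorphism, meaning $\pi^{-1}$ is regular near $\pi(x)$ and inverts $\pi$ there. Then $W$ is open and nonempty since $\pi$ is birational. Using $\pi^\sigma\circ\phi=\widehat\phi\circ\pi$ together with the facts that $\phi$ is an isomorphism and $\widehat\phi|_U:U\to U^\sigma$ is an isomorphism, one checks that $\phi(W)=W^\sigma$, where $W^\sigma$ is the corresponding good locus for $\pi^\sigma$. Hence $Z:=X\setminus W$ is a proper closed subvariety with $\phi(Z)\subseteq Z^\sigma$. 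Wildness forces $Z=\emptyset$.

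\emph{Step 4: conclusion.} Now $\pi:X\to\widehat X$ is a regular map landing in $U$, and it is a local isomorphism everywhere. Since $\pi$ is birational and $X$ is irreducible, it is an open immersion into $U$. Since $X$ is projective, $\pi(X)$ is closed in $\widehat X$; being also open and dense, $\pi(X)=\widehat X$, and this equals $U$. So $\pi:X\to\widehat X$ is an isomorphism, and $\widehat X$ is a single $G$-orbit. Transporting $\widehat\theta$ along $\pi^{-1}$ gives a regular transitive action $G\times X\to X$ that agrees with $\theta$ generically, hence as rational maps. It is faithful because $g\mapsto\theta_g$ is injective. Equivariance for $\rho\times\phi$ holds on a dense open set by~(\ref{phirho}), and hence everywhere since all maps are now regular. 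Thus $(X,\phi)$ is homogeneous for $(G,\rho)$.
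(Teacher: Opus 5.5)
Your proof is correct. Its first half (the dense open orbit $U$, and biregularity of $\widehat\phi|_U:U\to U^\sigma$ via $G$-invariance of the domain of definition) is exactly the paper's Claim~\ref{U}. The second half takes a genuinely different route.

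\emph{The paper's route.} It applies Corollary~\ref{cor:emptyint} to the equivariant map $\pi_U:(X,\phi)\dto(U,\widehat\phi|_U)$ to get $\pi$ regular with image in $U$, and uses projectivity to conclude $U=\widehat X$. It then shows $\pi$ is an isomorphism by Zariski's Main Theorem. That step needs normality of the homogeneous space $\widehat X$ and a second appeal to wildness, this time of $(\widehat X,\widehat\phi)$, applied to the invariant locus $E$ of positive-dimensional fibres.

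\emph{Your route.} You work with one open set $W\subseteq X$, the locus where $\pi$ is a local isomorphism into $U$. Its invariance is formal from $\pi^\sigma=\widehat\phi\circ\pi\circ\phi^{-1}$ and $(\pi^\sigma)^{-1}=\phi\circ\pi^{-1}\circ\widehat\phi^{-1}$, together with biregularity of $\phi$ and of $\widehat\phi|_U$. So a single use of wildness of $(X,\phi)$ gives regularity and the isomorphism property at once, and projectivity then gives surjectivity.

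\emph{Comparison.} Your argument is more elementary and self-contained: no Zariski Main Theorem, no normality, and no need to transfer wildness to $\widehat X$. The paper's argument is more modular, since it reuses Corollary~\ref{cor:emptyint}, which it needs again in Theorem~\ref{wild-compoundiso}.

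One small point to state explicitly: take $U$ to be defined over $K$. You can do this as the paper does, as the orbit of a $K$-point, which exists because $K$ is algebraically closed. Alternatively, note that the dense orbit is unique. Either way, $Z=X\setminus W$ is then a subvariety over $K$, so the wildness hypothesis applies to it.
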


\begin{proof}
Suppose $(X,\phi)$ is a wild projective inversive $\sigma$-variety over $(K,\sigma)$, and
let $\pi:(X,\phi)\dto (\widehat X,\widehat\phi)$ be a projective regularisation.
We will show that $(\widehat X,\widehat\phi)$ is a $\sigma$-variety (that is, $\widehat\phi$ is an isomorphism), that it is homogeneous for $(G,\rho)$, and that~$\pi$ is an isomorphism of $\sigma$-varieties.
This will imply that $(X,\phi)$ is itself homogeneous for $(G,\rho)$, as desired.

From~(\ref{phirho}), (\ref{weil}), and~(\ref{diag:hatphi}), we immediately deduce that
\begin{equation}
\label{hatphirho}
\xymatrix{
G\times \widehat X\ar[rr]^{\widehat\theta}\ar@{-->}[d]_{(\rho\times\widehat\phi)} && \widehat X\ar@{-->}[d]^{\widehat\phi}\\
G^{\sigma}\times \widehat X^{\sigma}\ar[rr]_{\widehat\theta^{\sigma}} && \widehat X^{\sigma}
}
\end{equation}
commutes.
The generic transitivity condition in pre-homogeneity implies that the action of~$G$ on $\widehat X$ is also generically transitive.
It follows (as $K$ is algebraically closed) that there is $a\in\widehat X(K)$ with $Ga=:U$ a Zariski dense and open subset of~$\widehat X$.

\begin{claim}
\label{U}
$\widehat\phi$ restricts to an isomorphism from $U$ to $U^{\sigma}$.
\end{claim}

\begin{claimproof}
Let~$I\subseteq \widehat X$ denote the indeterminacy locus of $\widehat\phi$, and suppose $b\in U$.
As $U\not\subseteq I$, and recalling that $U$ is a $G$-orbit, there must be some $g\in G$ such that $gb\in U\setminus I$.
Hence $\widehat\phi$ is defined at $gb$.
It follows from~(\ref{hatphirho}) that $\widehat\phi=\rho(g)^{-1}\circ\widehat\phi\circ g$ is defined at~$b$.
This shows that $\widehat\phi$ is regular on~$U$.

Next we observe that $\widehat\phi(U)=U^\sigma$.
Indeed, as $\widehat\phi(ga)=\rho(g)\widehat\phi(a)$ we have that $\widehat\phi(U)=G^\sigma\cdot\widehat\phi(a)$.
In particular,  $G^\sigma\cdot\widehat\phi(a)$ is a Zariski dense orbit in $\widehat X^\sigma$.
On the other hand,  $U^\sigma=G^\sigma\cdot\sigma(a)$ is also a Zariski dense orbit in $\widehat X^\sigma$.
Hence $G^\sigma\cdot\widehat\phi(a)=G^\sigma\cdot\sigma(a)$, and $\widehat\phi(U)=U^\sigma$, as desired.

A similar argument works for $\widehat\phi^{-1}$, showing that $\widehat\phi^{-1}$ is regular on $U^\sigma$ and $\widehat\phi^{-1}(U^\sigma)=U$.
We thus have regular maps $\widehat\phi_U:U\to U^\sigma$ and $(\widehat\phi^{-1})_{U^\sigma}:U^\sigma\to U$ which are inverse to each other as rational maps, and hence as regular maps.
\end{claimproof}

We now argue that $U=\widehat X$.
Indeed, it follows from Claim~\ref{U} that $(U,\widehat\phi|_U)$ is an inversive $\sigma$-variety.
As $U$ is a Zariski open and dense subset of~$\widehat X$, we can consider the birational map $\pi_U:X\dto U$ obtained by restricting $\pi$, and we note that $\pi_U:(X,\phi)\dto (U,\widehat\phi|_U)$ is then an equivariant dominant rational map of inversive $\sigma$-varieties.
As $(X,\phi)$ is wild, $\pi_U$ is regular -- this is Corollary~\ref{cor:emptyint}.
As~$X$ is projective, this forces $U=\widehat X$.

Hence, by Claim~\ref{U}, $\widehat\phi:\widehat X\to\widehat X^\sigma$ is an isomorphism and $(\widehat X,\widehat\phi)$ is an honest inversive $\sigma$-variety.
Moreover, (\ref{hatphirho}) together with the fact that $Ga=U=\widehat X$, implies that $(\widehat X,\widehat\phi)$ is homogeneous for $(G,\rho)$.
We also get, from the fact that $U=\widehat X$, that $\pi=\pi_U$, and hence $\pi:X\to\widehat X$ is regular.
So $\pi:(X,\phi)\to(\widehat X,\widehat\phi)$ is a (surjective) morphism of $\sigma$-varieties.

It remains only to argue that~$\pi$ is an isomorphism.
Since~$\pi$ is birational and $\widehat X$ is normal (it is a homogeneous space for an algebraic group), this will follow from Zariski's Main Theorem once we prove that all the fibres of $\pi$ are finite, see~\cite[Section~III.9]{mumford}.
Now, the set of points~$E$ in $\widehat X$ over which the fibre is positive-dimensional is a proper closed subvariety.
The equivariance of~$\pi$ implies that~$E$ is invariant.
But $(\widehat X,\widehat\phi)$ is wild because
the pull-back under~$\pi$ of any proper invariant subvariety of $(\widehat X,\widehat\phi)$ would be a proper invariant subvariety of $(X,\phi)$.
Hence~$E$ is empty, as desired.
\end{proof}

\begin{corollary}
Every isotrivial wild projective inversive $\sigma$-variety is homogeneous.
\end{corollary}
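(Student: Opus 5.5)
The corollary should follow by chaining three earlier results: Lemma~\ref{nirfs}, Example~\ref{wildiso-prehom}, and Proposition~\ref{prop:wildreg}. Let $(X,\phi)$ be an isotrivial wild projective inversive $\sigma$-variety over $(K,\sigma)$. Here $K$ is the fixed algebraically closed characteristic~$0$ inversive difference field of this section.

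\emph{Step 1.} By Lemma~\ref{nirfs}, wildness implies that $(X,\phi)$ admits no nonconstant invariant rational functions.

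\emph{Step 2.} $(X,\phi)$ is isotrivial and has no nonconstant invariant rational functions, so Example~\ref{wildiso-prehom} applies. The binding group $(G,\rho)$ from~\cite{qfint} then makes $(X,\phi)$ pre-homogeneous for $(G,\rho)$. In particular, $G$ acts faithfully by birational transformations, generically transitively and equivariantly.

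\emph{Step 3.} $(X,\phi)$ is wild, projective, and pre-homogeneous for $(G,\rho)$, so Proposition~\ref{prop:wildreg} shows it is homogeneous for $(G,\rho)$. Hence it is homogeneous.

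There is no real obstacle, since all the work is in Proposition~\ref{prop:wildreg} and in the binding group theorem invoked in Example~\ref{wildiso-prehom}. The only points to check are bookkeeping. First, the standing hypotheses of Proposition~\ref{prop:wildreg} (algebraically closed $K$, characteristic zero) should match those under which Example~\ref{wildiso-prehom} is applied. Second, the $\sigma$-group produced by the binding group theorem should be defined over $K$, with $\rho:G\to G^\sigma$ an isomorphism of algebraic groups, as the definition of pre-homogeneity requires. If $G$ comes out disconnected, that causes no difficulty: the definitions allow disconnected $G$, and one may also pass to $G^0$.
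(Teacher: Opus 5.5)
Your proposal is correct and is exactly the paper's proof. It chains Lemma~\ref{nirfs} (no nonconstant invariant rational functions), Example~\ref{wildiso-prehom} (pre-homogeneity for the binding group) and Proposition~\ref{prop:wildreg} (upgrading to homogeneity), with your bookkeeping checks consistent with the standing hypotheses of that section.
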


\begin{proof}
Suppose $(X,\phi)$ is a wild projective inversive $\sigma$-variety over $(K,\sigma)$.
By Lemma~\ref{nirfs}, $(X,\phi)$ admits no nonconstant invariant rational functions.
If $(X,\phi)$ is in addition isotrivial then, by Example~\ref{wildiso-prehom}, it is pre-homogeneous.
Now apply Proposition~\ref{prop:wildreg}.
\end{proof}

\bigskip
\section{Fundamental isotriviality}
\label{sec:fi}

\noindent
The gap between homogeneity and actually being an abelian variety, for a projective $\sigma$-variety, is bridged if the group action is not only transitive but free.
To ensure freeness of the action, we need a stronger condition than isotriviality.

Fix an algebraically closed inversive difference field $(K,\sigma)$ of characteristic~$0$.

\begin{definition}
\label{def:fund}
A $\sigma$-variety $(X,\phi)$ over $(K,\sigma)$ is {\em fundamental isotrivial} if there are algebraically independent generic points $a,b$ of $(X,\phi)$ and a finite tuple~$c$ from $\C=\fix(\U,\sigma)$, such that $K(a,b)=K(a,c)$.
\end{definition}

Fundamental isotriviality says that the base extension of $(K,\sigma)$ required to witness isotriviality can be taken to be the function field of the $\sigma$-variety itself.
It is inspired by what is often called ``fundamental internality" in model theory.

We can improve upon the definition:

\begin{lemma}
\label{lem:fund}
Suppose $(X,\phi)$ is fundamental isotrivial.
For all generic points $a,b$ of $(X,\phi)$ there is a finite tuple~$c$ from $\C$, such that $K(a,b)=K(a,c)$.
\end{lemma}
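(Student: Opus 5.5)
The plan is to reduce to the algebraically independent case by a homogeneity argument, and then handle dependent pairs by passing through an auxiliary generic point independent from both.

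\emph{Step 1: independent pairs.} Fix the witnesses $a_0,b_0,c_0$ given by Definition~\ref{def:fund}, and let $a,b$ be any algebraically independent generic points of $(X,\phi)$.

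\begin{itemize}
\item Both $(a,b)$ and $(a_0,b_0)$ are Zariski generic points of $X\times X$ over $K$ lying in $(X\times X,\phi\times\phi)^\sharp$.
\item Hence the $K$-algebra isomorphism $K(a_0,b_0)\to K(a,b)$ with $a_0\mapsto a$ and $b_0\mapsto b$ commutes with $\sigma$. Indeed, $\sigma$ acts on both fields through $\phi\times\phi$ in the same way.
\item Write $c_0=F(a_0,b_0)$ with $F$ rational over $K$, which is possible since $c_0\in K(a_0,b_0)$. Set $c:=F(a,b)$.
\item Then $c$ is $\sigma$-fixed, because the isomorphism respects $\sigma$. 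Transporting $K(a_0,b_0)=K(a_0,c_0)$ across the isomorphism gives $K(a,b)=K(a,c)$.
\end{itemize}

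\emph{Step 2: arbitrary $a,b$.} Let $L:=K(a,b)^{\alg}$, which is inversive as in the remark on generic fibres. By saturation and Zariski density of $(X_L,\phi_L)^\sharp$, choose $d\in(X,\phi)^\sharp$ Zariski generic over $L$. Then $(a,d)$ is an independent pair, and so is $(d,b)$.

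\begin{itemize}
\item Step 1 applied to $(a,d)$ gives a fixed tuple $e_1$ with $K(a,d)=K(a,e_1)$.
\item Step 1 applied to $(d,b)$ gives a fixed tuple $e_2$ with $K(d,b)=K(d,e_2)$.
\item So $d\in K(a,e_1)$ and $b\in K(d,e_2)\subseteq K(a,e_1,e_2)$.
\end{itemize}

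Thus $K(a)\subseteq K(a,b)\subseteq K(a)(e)$ with $e=(e_1,e_2)$ a tuple from $\C$. It remains to replace $e$ by fixed elements lying inside $K(a,b)$.

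\emph{Step 3: descent, the main obstacle.} Put $M:=K(a)$. This is an inversive difference field, since $\sigma(a)=\phi(a)$ and $\phi$ is an isomorphism. Let $E:=K(a,b)$, a difference field with $M\subseteq E\subseteq M(e)$. I would show $E=M(\fix(E))$.

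\begin{itemize}
\item Use the standard fact that $M$ and $\C$ are linearly disjoint over $\fix(M)$. Then $M[\fix(M)(e)]\cong M\otimes_{\fix(M)}\fix(M)(e)$, with $\sigma$ acting only on the left factor.
\item The key ingredient is a Speiser/Hilbert~90-type lemma. It says that a $\sigma$-stable $M$-subspace $W$ of $M\otimes_{\fix(M)}V$, with $V$ a $\fix(M)$-space on which $\sigma$ acts trivially, is spanned by $\sigma$-fixed vectors.
\item To prove it, take a nonzero $w\in W$ of minimal support with respect to a basis of $V$, normalised to have a coefficient equal to $1$. Then $\sigma(w)-w\in W$ has strictly smaller support, so it vanishes, and $w$ is fixed. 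Induction on support then gives the spanning statement.
\item Apply this to finite-dimensional pieces, namely $E$ intersected with the span of monomials in $e$ of bounded degree, and then pass to fraction fields. This yields $E=M(\fix(E))$.
\end{itemize}

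Finally, $\fix(E)$ is a subfield of the finitely generated field extension $E$ of $\fix(K)$, so it is finitely generated. Choose a finite generating tuple $c$ of $\fix(E)$ over $\fix(K)$; then $K(a,b)=K(a,c)$.

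I expect the descent in Step~3 to need the most care: handling fractions rather than polynomials, and ensuring finite generation of $\fix(E)$. If that route proves awkward, I would instead try to cite the corresponding statement for internality from~\cite{qfint}.
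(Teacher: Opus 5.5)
Your Steps 1 and 2 are correct.

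Step 1 is the paper's argument for independent pairs. The paper invokes stationarity of quantifier-free types over the algebraically closed field $K$. That is exactly your observation that $(a_0,b_0)\mapsto(a,b)$ is a $K$-isomorphism of function fields of $X\times X$ commuting with $\sigma$.

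For dependent pairs your route differs from the paper's, and yours is the one that actually works. The paper asserts a generic $\widehat b$ with $\qftp(a,b/K)=\qftp(a,\widehat b/K)$ and $a,\widehat b$ algebraically independent. But equality of quantifier-free types over $K$ preserves all polynomial relations, so no such $\widehat b$ exists when $a,b$ are dependent. Your auxiliary $d$ correctly yields $b\in K(a,e_1,e_2)$, which is all that Proposition~\ref{prop:fund-wac} uses. A descent is then genuinely needed to find $c$ inside $K(a,b)$.

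\textbf{The gap is in that descent.} Applying your Speiser-type lemma to the bounded-degree pieces $E\cap M[e]$ and passing to fraction fields does not recover $E$. Take $e_1,e_2\in\C$ algebraically independent over $M$ and $E=M(e_1/e_2)$. Then $E$ is a difference field with $M\subseteq E\subseteq M(e)$, yet $E\cap M[e]=M$, because a polynomial in $e_1,e_2$ that is a rational function of $e_1/e_2$ is homogeneous of degree $0$. Nor does $M(e)$ itself have the form $M\otimes_{\fix(M)}V$ with $\sigma$ trivial on $V$: in general $M\otimes_{\fix(M)}\fix(M)(e)$ is a proper subring of $M(e)$.

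\textbf{The missing idea} is to apply linear disjointness to $E$ rather than to $M$. By the same argument you cite for $M$, $E$ and $\C$ are linearly disjoint over $D:=\fix(E)$. Given $x\in E$:
\begin{enumerate}
\item Write $x=p(e)/q(e)$ with $p,q$ polynomials over $M$.
\item Pick a $D$-basis $v_1,\dots,v_n$ of the $D$-span of the monomials in $e$ occurring in $p$ and $q$.
\item Expand $q(e)=\sum_jQ_jv_j$ and $p(e)=\sum_jP_jv_j$ with $P_j,Q_j\in M(D)\subseteq E$.
\item Then $\sum_j(xQ_j-P_j)v_j=0$ is a relation with coefficients in $E$, so $xQ_j=P_j$ for every $j$.
\item Since $q(e)\neq0$, some $Q_j\neq0$, whence $x\in M(D)$.
\end{enumerate}
So $E=M(\fix(E))$, and the Speiser lemma is not needed at all.

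Finally, your finite-generation step is misjustified. $E$ is finitely generated over $K$, not over $\fix(K)$; take $K=\CC(t)^{\alg}$ with the shift. The step is also unnecessary: $b$ is a finite tuple in $M(\fix(E))$, so $b\in M(c)$ for some finite tuple $c$ from $\fix(E)$, and then $K(a,b)=K(a,c)$.
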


\begin{proof}
By definition there are algebraically independent generic points $a',b'$ of $(X,\phi)$ and a finite tuple~$c'$ from $\C$, such that $K(a',b')=K(a',c')$.
We argue model-theoretically, but just for convenience.
First consider the case when~$a$ and~$b$ are algebraically independent (over~$K$).
Then, by stationarity of quantifier-free types over $\acl$-closed sets, $\qftp(a,b/K)=\qftp(a',b'/K)$.
So $(a,b)\mapsto(a',b')$ induces an isomorphism $(K(a,b),\sigma)\to(K(a',b'),\sigma)$ of difference field extensions of $(K,\sigma)$.
Hence  there is~$c$ from $\fix(K(a,b),\sigma)$, corresponding to $c'$, such that 
$K(a,b)=K(a,c)$.
In general, given generics $a,b$ of $(X,\phi)$ not necessarily independent, existence of nonforking extensions for quantifier-free types gives us a generic point~$\widehat b$ such that $\qftp(a,b/K)=\qftp(a,\widehat b/K)$, and $a, \widehat b$ are algebraically independent.
Again $(a,b)\mapsto(a,\widehat b)$ induces an isomorphism $(K(a,b),\sigma)\to(K(a,\widehat b),\sigma)$ and the result for $a, \widehat b$ thus transfers to $a,b$ (for a different choice of~$c$).
\end{proof}

Translations on algebraic groups (Example~\ref{ex:translation}) are fundamental isotrivial.
In fact, in the autonomous case, isotriviality almost always agrees with fundamental isotriviality:

\begin{proposition}
\label{prop:iso-fund}
Suppose $X$ is an irreducible variety over an algebraically closed field~$k$, and $\phi\in\aut(X)$ admits no nonconstant invariant rational functions.
If $(X,\phi)$ is isotrivial over $(k,\id)$ then it is fundamental isotrivial.
\end{proposition}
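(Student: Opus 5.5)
The plan is to reduce to the computation in Example~\ref{ex:translation}, where fundamental isotriviality is visible by hand. For left translation by $h$ on a group $A$, take generic points $a,b$ and put $c:=a^{-1}b$. Then $\sigma(c)=a^{-1}h^{-1}hb=c$, so $c\in\C$, and $K(a,b)=K(a,c)$ because $b=ac$. So it suffices to show that $(X,\phi)$ is birationally equivalent, over $(k,\id)$, to a translation by some $h\in A(k)$ on a connected algebraic group $A$ over $k$. It will be convenient to make $A$ commutative. Birational equivalence is harmless: it maps generic points to generic points and preserves the fields they generate over $K$, and Definition~\ref{def:fund} is phrased entirely in those terms.

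First step: produce a generically transitive algebraic group action. Since $(X,\phi)$ is isotrivial and has no nonconstant invariant rational functions, Example~\ref{wildiso-prehom} makes it pre-homogeneous for its binding group $(G,\rho)$, which we may take connected. The action $\theta:G\times X\dto X$ is then generically transitive and equivariant in the sense of~(\ref{phirho}).

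Second step: use autonomy to get translational form. Here $\phi\in\aut(X)$ and everything is defined over $k\subseteq\fix(K,\sigma)$. I would use the binding group theorem of~\cite{qfint} (as indicated in the discussion after Example~\ref{ex:translation}) to show that the birational transformation $\phi$ itself lies in the image of $G$, say $\phi=\theta_h$ with $h\in G(k)$. I would then replace $G$ by $A:=$ the Zariski closure of $\langle h\rangle$, or at least by a commutative subgroup containing $h$ that still acts generically transitively. Generic transitivity of this smaller group should follow from the absence of invariant rational functions. Indeed, an orbit of the closure of $\langle h\rangle$ through a generic point is $\phi$-invariant. The field of rational functions constant on its orbits is therefore a field of $\phi$-invariant rational functions, hence equal to $k$. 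By Rosenlicht's theorem the action then has a dense orbit. I expect this step to be the main obstacle. Extracting from~\cite{qfint} that $\phi$ is realised by an element of a commutative group acting generically transitively is where the real content lies. If the binding group cannot be used directly, I would fall back on~\cite{bms}, which gives exactly this translational description of autonomous isotrivial dynamics.

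Third step: pass to a commutative group. Once a connected commutative $A$ acts generically transitively and $\phi$ is translation by $h\in A(k)$, all points of the dense orbit have the same stabiliser $H$, by commutativity. So $X$ is birational to $A/H$, which is again a commutative algebraic group, and $\phi$ corresponds to translation by the image of $h$. We may therefore assume $H$ trivial, and $(X,\phi)$ becomes birationally the translation $(A,h\cdot)$.

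Fourth step: conclude. Apply the opening computation with additive notation: for algebraically independent generic points $a,b$ of $(A,h+)$, we have $c:=b-a\in\C$ and $K(a,b)=K(a,c)$. This is exactly Definition~\ref{def:fund}, so $(X,\phi)$ is fundamental isotrivial.
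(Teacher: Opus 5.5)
Your proposal is correct and follows the paper's proof. The key input is the translational description of autonomous isotrivial dynamics (\cite[Theorem~5.1]{qfint}, which the paper also simply cites at exactly the point you flag as the main obstacle). After that, both arguments pass to the commutative closure of $\langle h\rangle$, identify a dense open orbit with $A/H$, and trivialise by subtracting a generic point $a$; your $c=b-a$ is precisely the image of $b$ under the paper's equivariant map $\rho:(U,\phi)\to(G/S,\id)$ over $k(a)$.

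The only real variation is that you get the dense orbit from Rosenlicht's theorem via $k(X)^A\subseteq k(X)^\phi=k$, where the paper cites the dense-orbit result \cite[Corollary~5.2]{qfint}. Your Step~1 is unnecessary, and so is assuming $A$ connected: irreducibility of $A/H$ already follows from that of $X$.
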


\begin{proof}
As we have already mentioned, autonomous isotrivial dynamics are translational by~\cite[Theorem~5.1]{qfint}.
That is, there is an algebraic group~$G$ over~$k$ acting faithfully on~$X$ such that $\phi$ is translation by some $g_0\in G(k)$.
Replacing~$G$ by the Zariski closure of the group generated by~$g_0$, we may assume that~$G$ is commutative.
The fact that $(X,\phi)$ admits no invariant rational functions implies that there is $x\in X(k)$ whose $\phi$-orbit is Zariski dense in~$X$.
(This is the truth of the Zariski dense orbit conjecture for translational dynamics, see for example~\cite[Corollary~5.2]{qfint}.)
It follows that $Gx=:U$ is Zariski open in $X$, and is itself $G$-invariant by commutativity.
In particular, $(U,\phi)$ is also an inversive $\sigma$-variety over $(k,\id)$, and, since fundamental isotriviality is a birational invariant, it suffices to show that $(U,\phi)$ is fundamental isotrivial.
Note that $G$ acts transitively and faithfully on~$U$, the latter as $U$ is Zariski dense in~$X$.
Now fix a generic point~$a$ of $(X,\phi)$ and let $L=k(a)$.
Note that $S:=\Stab(x)=\Stab(a)$ by commutativity of~$G$ and transitivity of the action.
Hence we have an $L$-definable isomorphism $\rho:U\to G/S$ given by $\rho(u)=hS$ if $ha=u$.
(Here we eschew the subscripts indicating that we have taken base extensions to~$L$.)
Note that $\rho^\sigma:U\to G/S$ is given by $\rho^\sigma(u)=hS$ if $h(g_0a)=u$, since $\sigma(a)=\phi(a)=g_0a$.
Hence,
\begin{eqnarray*}
\rho^\sigma\phi(u)=hS
&\iff&
h(g_0a)=\phi(u)\\
&\iff&
h(g_0a)=g_0u\\
&\iff&
ha=u\\
&\iff&
\rho(u)=hS
\end{eqnarray*}
That is, $\rho^\sigma\phi=\rho$, proving that 
$\rho:(U,\phi)\to(G/S,\id)$
is equivariant, and hence $(U,\phi)$ is fundamental isotrivial.
\end{proof}

Beyond the autonomous case, this does not hold.
Indeed the isotrivial $\sigma$-variety structure on the projective line from Corollary~\ref{cor:p1wild}, which does not admit nonconstant invariant rational functions (as it is wild), is not fundamental isotrivial.
In fact, in the fundamental isotrivial case only abelian varieties can be wild:

\begin{proposition}
\label{prop:fund-wac}
Suppose $(X,\phi)$ is a projective inversive $\sigma$-variety over $(K,\sigma)$.
If $(X,\phi)$ is fundamental isotrivial and wild then $X$ is an abelian variety.
\end{proposition}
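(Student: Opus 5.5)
Since fundamental isotriviality implies isotriviality and wildness rules out nonconstant invariant rational functions (Lemma~\ref{nirfs}), Example~\ref{wildiso-prehom} shows that $(X,\phi)$ is pre-homogeneous for its binding group $(G,\rho)$. Proposition~\ref{prop:wildreg} then makes $(X,\phi)$ homogeneous for $(G,\rho)$. Passing to $G^0$, which still acts transitively because $X$ is irreducible, we get $X\cong G^0/S$ with $S$ the stabiliser of a point. A projective homogeneous space with a \emph{free} action of a connected group is the group itself, and a projective connected algebraic group is an abelian variety. So the whole task is to prove that the action is free, i.e.\ that stabilisers are trivial. It suffices to show that the stabiliser $S_a$ of one generic point $a$ of $(X,\phi)$ is trivial. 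Stabilisers along a single orbit are conjugate, the action is transitive, and $a$ is Zariski generic, so the stabiliser of any point of $X(\U)$ is conjugate to $S_a$.

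To show $S_a=\{1\}$ I will use fundamental isotriviality together with the binding group property. First, by Lemma~\ref{lem:fund}, for any generic $b$ we have $K(a,b)=K(a,c)$ for some tuple $c$ from $\C$. Since this is a definable property of $\qftp(b/K(a))$, there is a single $K(a)$-definable rational map $\beta$ and a variety $V$ over $K(a)$ (the locus of $c$) such that $b=\beta(c)$. This makes $(X_{K(a)},\phi)$ birationally equivalent over $(K(a),\sigma)$ to a trivial system $(V,\id)$.

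Next, take $g\in S_a$; I will show $g$ acts trivially on $X$. The binding group characterisation says that elements of $G$ preserve every irreducible invariant subvariety of the systems $(X^r,\phi)\times(V',\id)$. Consider the locus of $(a,b,c)$ over $K$. It is an irreducible invariant subvariety of $(X^2,\phi)\times(\mathbb{A}^m,\id)$, with the action on $V'$ trivial, and the relation $b=\beta_a(c)$ is expressed by it. If $g$ preserves this locus, then $ga=a$ forces $gb=\beta_a(c)=b$. Hence $g$ fixes the generic point $b$, so $g$ acts as the identity on a dense set and, by faithfulness, $g=1$.

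The hard step is this last one: I need a precise form of the binding group property from~\cite{qfint}, namely that the $\sharp$-points of $(G,\rho)$ preserve the quantifier-free types over $K\cup\C$ of tuples of generic points, or equivalently that they fix $\C$-parameters. Alternatively, I could fall back on the Galois-theoretic description: $G^\sharp$ is the group of automorphisms of $(X,\phi)^\sharp$ induced by difference-field automorphisms fixing $K(\C)$. In that form, an automorphism fixing $a$ and $\C$ fixes $K(a,c)=K(a,b)$, and therefore fixes $b$. The argument then passes from $G^\sharp$ to $G$ by Zariski density of $(G,\rho)^\sharp$ in $G$.
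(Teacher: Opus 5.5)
Your proposal is correct. Up to homogeneity it is the paper's argument, but you finish by a genuinely different route.

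The paper uses the binding-group property only for $g\in(G,\rho)^\sharp$. From $ga=a$ and $b=f(a,c)$ it gets $gb=b$, so $(G,\rho)^\sharp$ acts freely on $(X,\phi)^\sharp$. It then notes that $\Stab(a)$ is a $\rho$-invariant subgroup, since $\rho(\Stab(a))=\Stab(\sigma(a))=\Stab(a)^\sigma$ for a $\sharp$-point $a$. Hence $\Stab(a)^0$ has Zariski-dense $\sharp$-points and must be trivial. So the paper only gets $\Stab(a)$ finite, and still has to argue that $G$ is complete (as a finite cover of $X$) and commutative.

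You apply the preservation property to \emph{every} element of $\Stab(a)$. This gives $\Stab(a)=\{1\}$ outright, so $X\cong G^0$ with no finite-quotient, completeness or commutativity step. The cost is that you need preservation of $W=\loc(a,b,c/K)$ by non-$\sharp$ elements, and some care in extracting $gb=b$.

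On that last point: since $g$ is not generic, $(a,gb,c)$ need not be generic in $W$. You should observe that $W\to\loc(a,c/K)$ is birational, because $b\in K(a,c)$. Hence its fibre over the $K$-generic point $(a,c)$ is exactly $\{(a,b,c)\}$, which is what turns $(a,gb,c)\in W$ into $gb=b$. If you take preservation by all of $G$ directly from the description of the binding group recalled in Section~2, the argument is then complete.

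If instead you derive it from the $\sharp$-version, apply density to the right set. The set $\{g\in G^0:(g,g,\id)(W)\subseteq W\}$ is closed, because the action is now regular. It contains $(G^0,\rho)^\sharp$; check this at a $\sharp$-point of $W$ generic over $K(g)$, which exists because $W$ is itself an invariant irreducible $\sigma$-variety. Hence it is all of $G^0$.

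What does not work is passing from $\Stab(a)\cap(G,\rho)^\sharp=\{1\}$ to $\Stab(a)=\{1\}$ by density, as your Galois-theoretic fallback seems to suggest. A Zariski-dense set can miss a proper closed subgroup entirely, which is exactly why the $\sharp$-version alone only yields finiteness in the paper. Also, $\sharp$-points are guaranteed dense only in $G^0$, not in a disconnected $G$. So use the fact that you have already passed to $G^0$.
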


\begin{proof}
Let $(G,\rho)$ be the binding group of $(X,\phi)$ from~\cite{qfint}.
Wildness ensures that $(X,\phi)$ admits no nonconstant invariant rational functions (Lemma~\ref{nirfs}), and so $(X,\phi)$ is pre-homogeneous for $(G,\rho)$ by Example~\ref{wildiso-prehom}.
By Proposition~\ref{prop:wildreg}, $(X,\phi)$ is homogeneous for $(G,\rho)$.

By Lemma~\ref{isolate} wildness implies that all elements of $(X,\phi)^\sharp$ are generic.
So $(G,\rho)^\sharp$ acts faithfully on $(X,\phi)^\sharp$.
We claim that that action is free.
That is, if $a\in(X,\phi)^\sharp$ and $g\in(G,\rho)^\sharp$ with $ga=a$ then $g=\id$.
Indeed, for any $b\in(X,\phi)^\sharp$, Lemma~\ref{lem:fund} gives a tuple~$c$ from $\C=\fix(\U,\sigma)$ and a rational function $f(x,y)$ such that $b=f(a,c)$.
The binding group preserves all algebraic relations between generic points of $(X,\phi)$ and fixed points.
That is, we have $gb=f(ga,c)=f(a,c)=b$.
As~$b$ was arbitrary, we have that $g=\id$, as desired.

Now, fix $a\in(X,\phi)^\sharp$ such that $Ga=X$.
By freeness, $\Stab(a)\leq G$ has no nontrivial points in $(G,\rho)^\sharp$.
But $\Stab(a)$, and hence its connected component of identity, is an invariant subgroup of $(G,\rho)$, and so should have a Zariski dense set of $\sharp$-points.
That is, $\Stab(a)$ is finite.
Since $G/\Stab(a)$ is isomorphic to the projective variety~$X$, we must have that~$G$ itself is a projective algebraic group.
In particular it is commutative, and so $X= G/\Stab(a)$ inherits the structure of a projective algebraic group.
As~$X$ is irreducible, it is an abelian variety.
\end{proof}

\bigskip
\section{Compound isotriviality}
\label{sec:ci}

\noindent
We now return to the autonomous case of an algebraically closed field~$k$, of characteristic~$0$, equipped with the trivial automorphism.
We wish to define a class of automorphisms of algebraic varieties that are built up by fibrations from isotrivial inversive $\sigma$-varieties (over difference field extensions of~$k$).

\begin{definition}
\label{compiso}
Suppose~$X$ is an irreducible variety over~$k$ and $\phi\in\aut(X)$.
We say that $(X,\phi)$ is {\em compound isotrivial} if it admits a sequence of equivariant dominant rational maps,
$(X,\phi)\dto (X_1,\phi_1)\dto\cdots\dto(X_n,\phi_n)$ such that
\begin{itemize}
\item[(i)]
each $\phi_i\in\aut(X_i)$,
\item[(ii)]
the generic fibre of each of the equivariant rational maps is absolutely irreducible and isotrivial as an inversive $\sigma$-variety in its own right, and
\item[(iii)]
$(X_n, \phi_n)$ is isotrivial.
\end{itemize}
If we have fundamental isotriviality in~(ii) and~(iii), then we say that $(X,\phi)$ is {\em compound fundamental isotrivial}.
\end{definition}

\begin{remark}
Condition~(i) is somewhat unfortunate, it would be better to only ask that each $\phi_i$ be a birational transformation, or even just a dominant rational self-map.
But we require automorphisms in our proof of Theorem~\ref{wild-compoundiso} below.
\end{remark}

\begin{example}
Fix an elliptic curve~$E$ over~$\CC$.
Let $X=E\times E$, and equip it with the automorphism $\phi(x,y)=(x+y,y)$.
Then the second co-ordinate projection $\pi:(X,\phi)\to(E,\id)$ is surjective and equivariant.
The generic fibres of~$\pi$ are translations on~$E$, and hence fundamental isotrivial, as in Example~\ref{ex:translation}.
So $\pi$ witnesses the compound fundamental isotriviality of $(X,\phi)$.
On the other hand, $(X,\phi)$ is not itself isotrivial: if it were it would be translational and some iterate of $\phi$ would have to be in $\aut^0(X)$, which is just $X$ itself acting by translation.
But clearly no iterate of $\phi$ acts by translation.

We can, similarly, get wild examples by fixing a nontorsion point $p\in E(\CC)$ and defining $\phi$ on $X=E\times E$ by $\phi(x,y)=(x+y,y+p)$, so that $(X,\phi)\to (E,y\mapsto y+p)$ witnesses the compound isotriviality of the nonisotrivial $(X,\phi)$.
\end{example}

But here is a general way of producing compound isotrivial automorphisms on abelian varieties.
When $A$ is an abelian variety, we should distinguish between automorphisms of the {\em variety}~$A$, which we will denote by $\autvar(A)$, and automorphisms of the {\em algebraic group}~$A$, which we will denote by $\autgrp(A)$.
Of course, every element of $\autvar(A)$ is obtained from an element of $\autgrp(A)$ by composing with a translation.

\begin{proposition}
\label{prop:av-ci}
Suppose~$A$ is an abelian variety over~$k$ and $\phi\in\autvar(A)$ is of the form $\alpha+a$ where $\alpha\in\autgrp(A)$ and $a\in A(k)$.
If~$\alpha$ is unipotent then $(A,\phi)$ is compound fundamental isotrivial.
\end{proposition}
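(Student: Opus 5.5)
We induct on the nilpotency index of $\beta:=\alpha-1\in\operatorname{End}(A)$. Since $\alpha$ is unipotent, $\beta^m=0$ for some $m\ge 1$.

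If $m=1$ then $\alpha=\id$ and $\phi$ is translation by $a$. By Example~\ref{ex:translation} it is isotrivial over $(k,\id)$. It is fundamental isotrivial by the same computation as in Example~\ref{ex:translation}: take $L=k(h)$ with $h$ a generic point of $(A,\phi)$, and send a second generic $b$ to $c=b-h\in\C$, so $k(h,b)=k(h,c)$. So the length-zero sequence works.

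For $m\ge 2$, let $B:=\beta(A)$. This is an abelian subvariety, and $B\neq 0$ since $\beta\neq0$. Set $A_1:=A/B$ with quotient map $f:A\to A_1$. Since $\beta$ commutes with $\alpha$, we have $\alpha(B)\subseteq B$, so $\alpha$ descends to $\alpha_1\in\autgrp(A_1)$. Because $\alpha x-x\in B$ for all $x$, in fact $\alpha_1=\id$. Hence $\phi$ descends to $\phi_1=\id+f(a)$, a translation on $A_1$, and $f:(A,\phi)\to(A_1,\phi_1)$ is an equivariant surjective morphism. This already settles condition~(iii), with fundamental isotriviality as in the base case. (Alternatively one could use $\ker\beta^{m-1}$-type filtrations and induct, but this single step seems to suffice.)

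The main point is condition~(ii): the generic fibre must be fundamental isotrivial. Let $b$ be a generic point of $(A_1,\phi_1)$ and $L=k(b)^{\alg}$. The fibre $A_b=x_0+B$ is a $B$-torsor over $L$; pick $x_0\in A(L)$ with $f(x_0)=b$. On $A_b$, for $x=x_0+y$ with $y\in B$, we get $\phi(x)=\alpha(x_0)+a+\alpha(y)$. This is not a translation, since $\alpha|_B$ may be nontrivial. So we iterate instead of stopping at one step. Replace the single fibration by the chain
$$A\to A/\beta^{m-1}(A)\to\cdots\to A/\beta(A),$$
or, more cleanly, use the filtration $A\supseteq\beta(A)\supseteq\beta^2(A)\supseteq\cdots\supseteq0$. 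Each successive quotient map $A/\beta^{i+1}(A)\to A/\beta^{i}(A)$ has fibres that are torsors for $\beta^i(A)/\beta^{i+1}(A)$. On this quotient $\alpha$ acts trivially, because $\beta$ maps $\beta^i(A)$ into $\beta^{i+1}(A)$. So on each generic fibre $\phi$ acts as $x\mapsto x+t(x)$, where $t$ lies in the group $C_i:=\beta^i(A)/\beta^{i+1}(A)$. Here $t=\alpha(x_0)-x_0+a$ shifts the base point, and the map $x\mapsto x-x_0$ intertwines it: $\phi$ becomes translation by the $L$-point $\sigma(x_0)$-correction in $C_i$.

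Such a nonautonomous translation of a torsor $(T,\phi)$ over $(L,\sigma)$, with $\phi(x)-x$ independent of $x$, is fundamental isotrivial. Given two generic points $u,v$, the difference $c=v-u\in C_i$ satisfies
$$\sigma(c)=\phi(v)-\phi(u)=v-u=c,$$
because $C_i$ is defined over $k$ with trivial twist. So $c$ is a tuple from $\C$ and $L(u,v)=L(u,c)$. Absolute irreducibility of the fibres is clear, since they are cosets of connected groups.

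The obstacle I expect is bookkeeping. One must check that the quotients are abelian varieties, which holds since the images $\beta^i(A)$ are connected. One must check that each $\phi_i$ is an automorphism, which holds as it is induced by $\alpha$ plus a translation. Finally, one must verify that $\phi$ restricted to a fibre really equals "identity on $C_i$ plus a constant". This uses that $\alpha-1$ kills $C_i$, and that the constant $\phi(x)-x$ is computed modulo $\beta^{i+1}(A)$ within a fibre.
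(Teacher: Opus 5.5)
Your proof is correct and is essentially the paper's argument: the paper inducts on the unipotency degree using the single step $A\to A/\beta^{n}(A)$ (where $\beta^{n+1}=0$), and unrolling that induction gives exactly your chain $A\to A/\beta^{m-1}(A)\to\cdots\to A/\beta(A)$, with the same fibre computation ($\alpha$ acts as the identity on the graded piece, so the difference of two $\sharp$-points in a fibre is $\sigma$-fixed, which the paper phrases as $\rho(x)=x-b$ satisfying $\rho^\sigma\phi=\rho$). One small slip: the constant shift $\phi(x)-x$ on a fibre lies in $A/\beta^{i+1}(A)$, not in $C_i$, since the fibre over $\bar b$ is carried to the fibre over $\sigma(\bar b)$; your argument only uses that this shift is independent of $x$, so nothing breaks.
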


\begin{proof}
We proceed by induction on the degree of unipotency.
That is, we proceed by induction on~$n\geq 1$ where $(\alpha-\id)^n=0$.
When $n=1$, $\alpha=\id$ and $\phi$ is translation, so that $(A,\phi)$ is fundamental isotrivial.

Let $\beta:=\alpha-\id$, and suppose that $\beta^{n+1}=0$.
Consider the abelian variety $B:=A/\beta^n(A)$, with quotient map $\pi:A\to B$.
Since $\alpha$ commutes with $\beta$, it induces an automorphism $\overline\alpha\in\autgrp(B)$.
Namely,
$\overline\alpha\circ\pi=\pi\circ \alpha$.
Letting $\overline\phi:=\overline\alpha+\pi(a)\in\autvar(B)$, we have the surjective equivariant morphism
$\pi:(A,\phi)\to(B,\overline\phi)$.
Note that 
$(\overline\alpha-\id)^n\circ\pi=\pi\circ(\alpha-\id)^n=\pi\circ\beta^n=0$.
As $\pi$ is surjective, this forces $(\overline\alpha-\id)^n=0$ on~$B$.
So, by the inductive hypothesis, $(B,\overline\phi)$ is compound fundamental isotrivial.

It remains to show that the generic fibre of $\pi:(A,\phi)\to(B,\overline\phi)$ is fundamental isotrivial.
Let $b\in (A,\phi)^\sharp$ be such that $\overline b=\pi(b)$ is generic in $(B,\overline\phi)$.
The restriction of $\phi$ gives us an inversive $\sigma$-variety structure on $A_{\overline b}$ over $K:=k(\overline b)^{\alg}$ which we denote by $(A_{\overline b},\phi)$.
We claim that $(A_{\overline b},\phi)$ is isomorphic to $(\beta^n(A),\id)$ over~$L:=K(b)$, which will suffice.
Note that $A_{\overline b}$ is just the coset $b+\beta^n(A)$, so that we do have an isomorphism $\rho:A_{\overline b}\to \beta^n(A)$ over~$L$ given by $\rho(x)=x-b$.
Now
$$(A_{\overline b})^\sigma=A_{\sigma(\overline b)}=A_{\overline\phi(\overline b)}=A_{\pi(\phi(b))}=\phi(b)+\beta^n(A)$$
and $\rho^\sigma:(A_{\overline b})^\sigma\to\beta^n(A)$ is given by 
$$\rho^\sigma(x)=x-\sigma(b)=x-\phi(b).$$
For  $x\in A_{\overline b}$, we compute:
\begin{eqnarray*}
\rho^\sigma\phi(x)
&=&
\rho^\sigma\phi(\rho(x)+b)\\
&=&
\rho^\sigma\big(\alpha(\rho(x)+b)+a\big)\\
&=&
\rho^\sigma\big(\alpha(\rho(x))+\alpha(b)+a\big)\\
&=&
\rho^\sigma\big(\rho(x)+\alpha(b)+a\big)\ \ \ \text{ as $\alpha=\id$ on $\beta^n(A)$ since $\beta^{n+1}=0$}\\
&=&
\rho^\sigma\big(\rho(x)+\phi(b)\big)\\
&=&
\rho(x).
\end{eqnarray*}
That is, $\rho^\sigma\phi=\rho$.
Hence $\rho:(A_{\overline b},\phi)\to(\beta^n(A),\id)$ is equivariant, as desired.
\end{proof}

\begin{corollary}
\label{cor:avwild-ci}
If $\phi$ is a wild automorphism of an abelian variety~$A$ over~$k$ then $(A,\phi)$ is compound fundamental isotrivial.
\end{corollary}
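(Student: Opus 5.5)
The plan is to reduce to Proposition~\ref{prop:av-ci}. Write $\phi=\alpha+a$ with $\alpha\in\autgrp(A)$ and $a\in A(k)$. It then suffices to show that wildness of $\phi$ forces $\alpha$ to be unipotent, that is, $(\alpha-\id)^n=0$ for some $n$. This is a known fact from~\cite{rrz}: wild automorphisms of abelian varieties have unipotent linear part. We would either cite it or argue as follows.

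First, use Lemma~\ref{iterate-wild}, which says every iterate $\phi^m$ is again wild. Note that $\phi^m=\alpha^m+a_m$ for some $a_m\in A(k)$. If $\alpha$ is not unipotent, the goal is to produce a proper invariant subvariety for some iterate. Consider the action of $\alpha$ on $H_1(A,\mathbb Z)$, or equivalently on the rational endomorphism algebra. Its characteristic polynomial has integer coefficients. If some eigenvalue $\lambda\neq1$ exists, we can choose a polynomial factor $p$ of the characteristic polynomial with $p(1)\neq0$. Then $B:=p(\alpha)(A)$, or alternatively the connected component of $\ker$ of the complementary factor, gives an $\alpha$-invariant abelian subvariety decomposition up to isogeny. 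This yields an equivariant surjection $A\to A'$ with $A'\neq0$, on which the induced linear part $\alpha'$ has no eigenvalue $1$. Then $\alpha'-\id$ is an isogeny of $A'$. Hence the induced affine map $\phi'=\alpha'+a'$ has a fixed point: solve $(\alpha'-\id)(x)=-a'$, which is possible since isogenies are surjective. The preimage in $A$ of that fixed point is a proper nonempty closed invariant subvariety, contradicting wildness. (If $A'=A$, the fixed point itself is the invariant subvariety.)

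Thus all eigenvalues of $\alpha$ on $H_1$ equal $1$. The remaining issue is to upgrade this to unipotency of $\alpha$ in $\operatorname{End}(A)$. Since $\operatorname{End}(A)\to\operatorname{End}(H_1(A,\mathbb Z))$ is injective, and $\alpha-\id$ acts nilpotently on $H_1$ once its characteristic polynomial is $(x-1)^{2g}$, we get $(\alpha-\id)^{2g}=0$ in $\operatorname{End}(A)$. Here we use characteristic zero and $k$ algebraically closed, working over $\CC$ via the Lefschetz principle, or using the $\ell$-adic Tate module instead. Proposition~\ref{prop:av-ci} then gives that $(A,\phi)$ is compound fundamental isotrivial.

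The main obstacle is the splitting step: producing an $\alpha$-equivariant quotient $A\to A'\neq0$ on which $\alpha'-\id$ is an isogeny. We would take $f(x)=(x-1)^r q(x)$ with $q(1)\neq0$, where $f$ is the characteristic polynomial, and set $A':=A/\ker(q(\alpha))^0$. The kernel $\ker(q(\alpha))$ is $\alpha$-invariant, so $\alpha$ descends to $A'$. We must then check that the eigenvalues of $\alpha'$ are the roots of $q$, so that none equals $1$. This is routine linear algebra on $H_1\otimes\mathbb Q$, where $q(\alpha)$ and $(\alpha-1)^r$ have complementary kernels.
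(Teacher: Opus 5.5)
Your primary route, citing \cite[Theorem~7.2]{rrz} to get $\phi=\alpha+a$ with $\alpha$ unipotent and then applying Proposition~\ref{prop:av-ci}, is exactly the paper's proof and is correct. The self-contained substitute for the citation, however, goes wrong at the step you yourself single out as the main obstacle.

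Write $V=H_1(A,\mathbb{Q})=V_1\oplus V_2$, with $V_1=\ker\big((\alpha-\id)^r\big)$ and $V_2=\ker q(\alpha)$. The abelian subvariety $\ker(q(\alpha))^0$ has rational homology $V_2$. So your $A'=A/\ker(q(\alpha))^0$ has $H_1(A',\mathbb{Q})\cong V/V_2\cong V_1$, and there $\alpha'$ has characteristic polynomial $(x-1)^r$. Thus $\alpha'$ is unipotent, $\alpha'-\id$ is not an isogeny unless $A'=0$, and no fixed point of $\phi'$ is produced. The ``routine linear algebra'' refutes, rather than confirms, the claim that the eigenvalues of $\alpha'$ are the roots of $q$.

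The repair is to quotient by the other factor. Take $C:=\ker\big((\alpha-\id)^r\big)^0$, which coincides with the image $q(\alpha)(A)$, i.e.\ the $B$ of your second paragraph with $p=q$. Set $A':=A/C$. Since $\alpha(C)=C$, the map $\phi$ descends to $\phi'=\alpha'+\bar a$. Now $H_1(A',\mathbb{Q})\cong V/V_1$ has characteristic polynomial $q$, and $q(1)\neq 0$. Hence $\alpha'-\id$ is an isogeny and $\phi'$ has a fixed point. The fibre of $A\to A'$ over that point is a coset of $C$: a nonempty invariant subvariety, proper exactly because $q$ is nonconstant.

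With this correction, and your correct faithfulness argument giving $(\alpha-\id)^{2g}=0$, you get a self-contained proof of the one direction of \cite[Theorem~7.2]{rrz} that the corollary needs. The appeal to Lemma~\ref{iterate-wild} plays no role and can be dropped.
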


\begin{proof}
Theorem~7.2 of~\cite{rrz}, which characterises the wild automorphisms of abelian varieties, tells us that if $\phi$ is wild then $\phi=\alpha+a$ for some unipotent $\alpha\in\autgrp(A)$ and $a\in A(k)$.
Now apply Proposition~\ref{prop:av-ci}.
\end{proof}

It therefore makes sense, in working toward the wild automorphism conjecture, to first restrict our attention to compound isotrivial, or even compound fundamental isotrivial, algebraic dynamics.
We settle the latter case:

\begin{theorem}
\label{wild-compoundiso}
Suppose $X$ is an irreducible projective variety over an algebraically closed field~$k$ of characteristic zero, and $\phi\in \aut(X)$ is wild.
If $(X,\phi)$ is compound fundamental isotrivial then $X$ is an abelian variety.
\end{theorem}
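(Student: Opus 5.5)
Our approach is induction on the length $n$ of the compound fundamental isotrivial sequence
$(X,\phi)\dto(X_1,\phi_1)\dto\cdots\dto(X_n,\phi_n)$.
The strategy is to show that each $X_i$ is an abelian variety, starting from the bottom of the tower, and then to lift this up the tower one step at a time.

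First we make everything regular and projective. By Corollary~\ref{cor:emptyint}, since $(X,\phi)$ is wild, the first map $f_1:X\dto X_1$ is regular. It is surjective onto its image, because $X$ is projective. Replacing $X_1$ by the image of $X$, which is a closed projective variety, we may assume $X_1$ is projective and $f_1$ is a surjective morphism. We will need to check that doing this preserves condition~(i) and the fundamental isotriviality of the relevant generic fibres, since these are birational invariants. Then $(X_1,\phi_1)$ is wild, because the preimage of a proper invariant subvariety would be a proper invariant subvariety of $X$. Iterating, all $X_i$ are projective and wild and all the maps are morphisms. Condition~(i), requiring $\phi_i\in\aut(X_i)$, is what makes the argument apply at each stage. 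In the base case, $(X_n,\phi_n)$ is projective, wild and fundamental isotrivial over $(k,\id)$, so Proposition~\ref{prop:fund-wac} shows that $X_n$ is an abelian variety. By the induction (applied to $X_1$ with a shorter sequence), $X_1$ is an abelian variety.

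Next we look at the generic fibre of $f:=f_1:X\to A:=X_1$. Take a generic point $b$ of $(A,\phi_1)$ and set $L=k(b)^{\alg}$. The fibre $(X_b,\phi_L)$ is a projective inversive $\sigma$-variety over $(L,\sigma)$, and by hypothesis it is fundamental isotrivial. We claim it is wild. Any proper invariant subvariety $Z\subseteq X_b$ is defined over $L$, and by spreading out it gives a subvariety of $X$ whose generic fibres are proper. Its Zariski closure, taken together with its finitely many $\phi$-translates (the conjugates over $k(b)$ should be permuted by $\phi$), should produce a proper invariant subvariety of $(X,\phi)$, contradicting wildness. Justifying this carefully is the first delicate point. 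Granting it, Proposition~\ref{prop:fund-wac} shows that $X_b$ is an abelian variety, since it is a torsor, and Proposition~\ref{prop:wildreg} makes it a homogeneous space for the binding group with finite stabilisers. Consequently the general fibres of $f$ are abelian varieties, or at least torsors for abelian varieties.

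Finally we must deduce that $X$ itself is an abelian variety. This is the main obstacle. We know $X\to A$ is a surjective morphism onto an abelian variety whose general fibres are abelian varieties, equivariant for an automorphism $\phi$ with no invariant proper subvarieties. First, all fibres are smooth abelian torsors: the locus of singular or non-equidimensional fibres is a $\phi_1$-invariant proper closed subset of $A$, hence empty by wildness of $A$. So $f$ is an abelian scheme torsor over $A$, and $X$ is smooth. Next we want to show that $X$ has trivial canonical bundle and no rational curves; in fact $X$ contains no rational curves, since the fibres and the base contain none. Then we want to conclude, for example via the Albanese map $X\to\mathrm{Alb}(X)$ and wildness, that $X$ is an abelian variety. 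Concretely, the Albanese map is equivariant for an induced automorphism, so its positive-dimensional-fibre locus and the locus where it fails to be an isomorphism are invariant, hence empty. We would show that it is finite étale by using that the fibres of $f$ inject into $\mathrm{Alb}(X)$, which we would check by functoriality on the fibres. An étale cover of an abelian variety is an abelian variety. Alternatively, one can invoke the result of~\cite{rrz} for varieties with such a fibration structure. We expect the Albanese injectivity on fibres to be the hardest step.
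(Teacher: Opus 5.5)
Your reduction is fine up to the point where the general fibres of $f\colon X\to A:=X_1$ are abelian varieties. Specifically:
\begin{itemize}
\item Your base case via Proposition~\ref{prop:fund-wac} over $(k,\id)$ is a legitimate substitute for the paper's appeal to translationality and \cite[Proposition~3.2]{rrz}.
\item Your wildness-of-the-fibre sketch works once made precise. Take the union $Z'$ of the $\mathrm{Gal}(L/k(b))$-conjugates of $Z$; it is still invariant, since $\sigma$ normalises that Galois group, and it is now over $k(b)$. Its Zariski closure over $k$ is then a proper $\phi$-invariant subvariety of $X$. This is a different but valid alternative to the paper's argument, which goes via an irreducible component fixed by an iterate, a generic $\sharp$-point $e$, $\loc(e/k)$, and Lemma~\ref{iterate-wild}.
\item Smoothness of $f$ does follow from wildness of $(A,\phi_1)$.
\end{itemize}

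The genuine gap is the last step. Wildness only excludes \emph{proper} invariant closed subsets. It therefore tells you nothing about the positive-dimensional-fibre locus of $\alpha\colon X\to\operatorname{Alb}(X)$ unless you already know $\alpha$ is generically finite. That is precisely your ``fibres of $f$ inject into $\operatorname{Alb}(X)$'', which you leave unproved and which cannot come from functoriality alone. Indeed it fails without a serious use of wildness. For a bielliptic surface $S=(E\times F)/G$, the projection $S\to E/G$ is a smooth morphism onto an elliptic curve with every fibre isomorphic to $F$. It \emph{is} the Albanese map, so every fibre is contracted (the monodromy on $H^1$ of the fibre has no invariants). There is also no result in \cite{rrz} about ``varieties with such a fibration structure'' to fall back on, and triviality of $K_X$ is asserted but not argued.

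The missing ingredient is a global input applied to $X$ itself. The paper proceeds as follows:
\begin{itemize}
\item Cao--P\v{a}un \cite{cp17} gives $\kappa(X)\geq 0$, since $X$ is an algebraic fibre space over an abelian variety with abelian general fibre.
\item $\kappa(X)>0$ is ruled out by \cite[Corollary~4.2]{rrz}.
\item Kirson's theorem \cite[Theorem~1.2]{kirson} for wild automorphisms in Kodaira dimension zero then gives $X=A'\times Y$ with $A'$ abelian and $\operatorname{Alb}(Y)=0$.
\item So $A'=\operatorname{Alb}(X)$, and the universal property makes $f$ factor through the projection $X\to A'$ via some $g\colon A'\to A$. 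Hence $X_b=g^{-1}(b)\times Y$.
\item Since $X_b$ is an abelian variety and $Y$ has trivial Albanese, $\dim Y=0$.
\end{itemize}
Some such input, or a genuinely different use of wildness that forces $\alpha$ to be finite and surjective, is needed to close your argument.
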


\begin{proof}
We work in an ambient difference-closed field $(\U,\sigma)$ with $k\subseteq\fix(\U,\sigma)$, and proceed by induction on the number of steps in the sequence witnessing the compound fundamental isotriviality.
In the base case $(X,\phi)$ is isotrivial, and hence translational.
But the wild automorphism conjecture holds for translational dynamics by~\cite[Proposition~3.2]{rrz}.

For the inductive step, consider 
$\pi:(X,\phi)\dto(X_1,\phi_1)$, the first equivariant dominant rational map in a sequence witnessing compound fundamental isotriviality.
As $(X,\phi)$ is wild, $\pi$ is regular by Corollary~\ref{cor:emptyint}.
Hence $X_1$ is also projective, and $\pi$ is surjective, so that $(X_1,\phi_1)$ is wild.
By the induction hypothesis, $X_1$ is an abelian variety.
Let $b\in (X_1,\phi_1)^\sharp$ be Zariski generic over~$k$.
So $K:=k(b)^{\alg}$ is an inversive difference subfield of $(\U,\sigma)$, and we consider the (nonautonomous) projective inversive $\sigma$-variety $(X_b,\phi)$ over  $(K,\sigma)$.

\begin{claim}
\label{wildfibre}
$(X_b,\phi)$ is wild.
\end{claim}

\begin{claimproof}
Suppose not, and that $Y$ is a proper invariant subvariety over~$K$.
One complication is that $Y$ may not be irreducible.
But, by $\phi$-invariance, we do have that $\phi$ will map each irreducible component of $Y$ to the $\sigma$-transform of some other irreducible component.
It follows that $\sigma^{-1}\circ\phi$ permutes the irreducible components of $Y$, and hence some iterate of it, say $(\sigma^{-1}\circ\phi)^n$ preserves an irreducible component, say $Y_\circ$.
Now, $(\sigma^{-1}\circ\phi)^n=\sigma^{-n}\circ\phi^{(n)}$.
So $\phi^{(n)}$ maps $Y_\circ$ to $(Y_\circ)^{\sigma^n}$.
That is, $Y_\circ$ is invariant for the $\sigma^n$-variety $(X_b,\phi^{(n)})$.
By irreducibility, and existential closedness of $(\U,\sigma)$, there is a Zariski generic point $e\in Y_\circ(\U)$ over~$K$ such that $\sigma^n(e)=\phi^{(n)}(e)$.
Now, let $Z:=\loc(e/k)$.
Since $Y\neq X_b$, and~$b$ is Zariski generic in~$X_1$ over~$k$, we have that~$Z\neq X$.
On the other hand, $\phi^{(n)}(e)=\sigma^n(e)$ implies that $Z$ is invariant for $(X,\phi^{(n)})$.
But wildness of $\phi$ implies wildness of $\phi^{(n)}$, by Lemma~\ref{iterate-wild}, which precludes the existence of such a~$Z$.
\end{claimproof}

By assumption, $(X_b,\phi)$ is fundamental isotrivial.
Hence wildness implies $X_b$ is an abelian variety, by Proposition~\ref{prop:fund-wac}.
Since $X$ is an algebraic fibre space over an abelian variety, with general fibre an abelian variety, we have, by a result of Cao and P\v{a}un~\cite{cp17}, that the Kodaira dimension of~$X$ is nonnegative.
But wildness rules out positive Kodaira dimension (this is~\cite[Corollary~4.2]{rrz}).
So the Kodaira dimension of~$X$ is zero.
In that case wildness implies that~$X$ is of the form $A\times Y$ where $A$ is an abelian variety and $Y$ has trivial Albanese (this is~\cite[Theorem~1.2]{kirson}).
It follows
that~$A$ is the Albanese of~$X$.
Since $\pi:X\to X_1$ is a morphism to an abelian variety, the universal property of the Albanese implies that~$\pi$ must factor through the co-ordinate projection $X\to A$.
That is, we have the following commuting diagram of surjective regular maps:
$$\xymatrix{
X=A\times Y\ \ \ \ \ \ \ar[d]\ar[dr]^\pi\\
A\ar[r]^{f\ \ \ }& X_1
}$$
for some regular map~$f$.
It follows that the fibre of~$\pi$ over~$b$ is $X_b=f^{-1}(b)\times Y$.
But $X_b$ is an abelian variety, and $Y$ has trivial Albanese, forcing $\dim Y=0$.
That is, $X=A$ is an abelian variety.
\end{proof}

Theorem~\ref{wild-compoundiso} reduces the wild automorphism conjecture to showing that {\em every wild automorphism of a projective variety is compound fundamental isotrivial}.



\end{document}